\documentclass[12pt]{article}

\usepackage[margin=1.20in]{geometry}

\usepackage{amssymb,amsmath}
\usepackage{amsthm,enumitem}
\usepackage{hyperref}
\usepackage{mathrsfs}
\usepackage{textcomp}
\hypersetup{
    colorlinks,%
    citecolor=black,%
    filecolor=black,%
    linkcolor=black,%
    urlcolor=black
}

\usepackage{verbatim}

\usepackage{sectsty}

\makeatletter

\newdimen\bibspace
\setlength\bibspace{0pt}   
\renewenvironment{thebibliography}[1]{%
 \section*{\refname 
       \@mkboth{\MakeUppercase\refname}{\MakeUppercase\refname}}%
     \list{\@biblabel{\@arabic\c@enumiv}}%
          {\settowidth\labelwidth{\@biblabel{#1}}%
           \leftmargin\labelwidth
           \advance\leftmargin\labelsep
           \itemsep\bibspace
           \parsep\z@skip     %
           \@openbib@code
           \usecounter{enumiv}%
           \let\p@enumiv\@empty
           \renewcommand\theenumiv{\@arabic\c@enumiv}}%
     \sloppy\clubpenalty4000\widowpenalty4000%
     \sfcode`\.\@m}
    {\def\@noitemerr
      {\@latex@warning{Empty `thebibliography' environment}}%
     \endlist}

\makeatother

\makeatletter

\newtheorem{thm}{Theorem}[section]
\newtheorem{lem}[thm]{Lemma}
\newtheorem{prop}[thm]{Proposition}


\def\XXint#1#2#3{{\setbox0=\hbox{$#1{#2#3}{\int}$}
  \vcenter{\hbox{$#2#3$}}\kern-.5\wd0}}

\newcommand{\al}{\alpha}                \newcommand{\lda}{\lambda}
\newcommand{\om}{\Omega}                \newcommand{\pa}{\partial}
           \newcommand{\ud}{\mathrm{d}}
\newcommand{\be}{\begin{equation}}      \newcommand{\ee}{\end{equation}}
                 
\newcommand{\Lda}{\Lambda}              
\newcommand{\R}{\mathbb{R}}

\begin{document}

\title{\textbf{Regularity of solutions to the Dirichlet problem for fast diffusion equations}\bigskip}

\author{\medskip  Tianling Jin\footnote{T. Jin was partially supported by Hong Kong RGC grant GRF 16306320 and NSFC grant 12122120.}\quad and \quad
Jingang Xiong\footnote{J. Xiong was is partially supported by  the National Key R\&D Program of China No. 2020YFA0712900, and NSFC grants 11922104 and 11631002.}}

\date{\today}

\maketitle

\begin{abstract} 
We prove global H\"older gradient estimates for bounded positive weak solutions of fast diffusion equations in smooth bounded domains with the homogeneous Dirichlet boundary condition, which then lead us to establish their optimal global regularity. This solves a problem raised by Berryman and Holland in 1980. 

\medskip

\noindent{\it Keywords}: Fast diffusion equation, Regularity.

\medskip

\noindent {\it MSC (2010)}: Primary 35B65; Secondary 35K59, 35K67.

\end{abstract}

\section{Introduction}

Let $\om$ be a bounded smooth domain in $\R^n$ with $n\ge 1$ and let $p\in (1,\infty)$. Consider the fast diffusion equation
\be \label{eq:main}
\pa_t u^p -\Delta u=0 \quad \mbox{in }\om \times (0,\infty)
\ee
with the homogeneous Dirichlet  boundary condition
\be \label{eq:main-d}
u=0 \quad \mbox{on }\pa \om \times (0,\infty)
\ee
and the initial condition 
\be \label{eq:initial}
u(x,0)= u_0 (x)\ge 0.
\ee 
In 1962, Sabinina \cite{S2} proved that the above Cauchy-Dirichlet problem admits a unique nonnegative weak solution if $u_0\in C_0^1(\om)$, shortly after the seminal paper of Oleinik-Kalashnikov-Chzou \cite{OKC} and Sabinina \cite{S1} on the porous medium equation.  Following \cite{OKC, S2}, 
we say that $u$ is a weak solution of \eqref{eq:main}-\eqref{eq:initial} if $u\in L^2_{loc}([0,\infty); H^1_0(\Omega))$, $u\ge 0$, $u^p\in C([0,\infty);L^{1}(\Omega))$, and satisfies 
\[
\int_{t_1}^{t_2}\int_{\Omega} \Big(u^p\pa_t\varphi -\nabla u\cdot\nabla\varphi\Big)\,\ud x\ud t= \int_{\om}u(x, t_2)^p \varphi \,\ud x- \int_{\om}u(x, t_1)^p \varphi \,\ud x
\] 
for any $0\le t_1< t_2<\infty$ and $\varphi\in C^1(\overline\Omega\times[0,\infty))$ which vanishes on $\pa \om \times [t_1,t_2]$.

The space $ C_0^1(\om)$ for the initial data can be enlarged to $ H_0^1(\om) \cap L^\infty(\om)$, and then, by the maximum principle, the solution $u$ will be bounded. If $u(\cdot,0)\not\equiv 0$, then there exists the so-called finite extinction time, i.e., a time $T^*>0$ such that $u(\cdot, t)\equiv 0$ for all $t\ge T^*$, and $u(\cdot, t)>0$ in $\Omega$ for $t\in(0,T^*)$. 
See Chapter 4 of Daskalopoulos-Kenig \cite{DaK} and Chapters 5-7 of V\'azquez \cite{Vaz} for systematic studies of the above Cauchy-Dirichlet problem (including the porous medium equations corresponding to $0<p<1$). 

In the pioneering work \cite{BH}, Berryman and Holland studied the extinction behavior of solutions to the Cauchy-Dirichlet problem \eqref{eq:main}-\eqref{eq:initial}. Their work was motivated by the Wisconsin octupole experiments of Drake-Greenwood-Navratil-Post \cite{DGNP} on anomalous diffusion of hydrogen plasma across a purely poloidal octupole magnetic field, where the density of the plasma satisfies the fast diffusion equation \eqref{eq:main} with $p=2$.  The experiments in \cite{DGNP} have shown that after a few milliseconds the solution $u$ evolves into a fixed shape which then decays in time. Berryman-Holland \cite{BH} proved this phenomenon along a sequence of time under the regularity assumption
\be \label{eq:bh}
\partial_t u, \nabla u, \nabla\partial_t u, \nabla^2 u\in C(\overline \om\times (0,T^*)).
\ee
In their final section of  conclusions and conjectures, they listed \eqref{eq:bh} as the first unsolved problem.  

The difficulty of this regularity problem lies in the singular parabolic structure near the boundary $\partial\Omega$, where $u=0$.  Sacks \cite{Sacks} and DiBenedetto \cite{DiBenedetto} proved that  bounded positive solutions to \eqref{eq:main}-\eqref{eq:main-d} are continuous up to the boundary. 
Global H\"older continuity was later proved by Chen-DiBenedetto \cite{CDi}. See also Chapter IV of DiBenedetto \cite{DiBenedettobook}. Kwong \cite{KwongY, KwongY2} and  DiBenedetto-Kwong-Vespri \cite{DKV} proved spatial Lipschitz regularity, that is, for every  $\delta>0$, there exists $C>0$ such that
\be \label{eq:DKV}
\frac{1}{C}\le \frac{u}{d}\le C \quad \mbox{in }\om\times[\delta,T^*-\delta],
\ee
where $d(x)=\mathrm{dist}(x,\pa \om)$. 
Various $L^\infty$ estimates, Harnack inequalities and more continuity estimates can be found in, e.g.,  Aronson-B\'enilan \cite{AB},  Caffarelli-Evans \cite{CE},   Herrero-Pierre \cite{HP}, Dahlberg-Kenig \cite{DKenig88},  DiBenedetto-Kwong \cite{DK}, DiBenedetto-Gianazza-Vespri \cite{DGV,DGV-b}, Lu-Ni-V\'azquez-Villani \cite{LNVV} and Bonforte-V\'azquez \cite{BV}.

In our previous paper \cite{JX19}, we proved the regularity property \eqref{eq:bh} asked by Berryman and Holland in the Sobolev subcritical and critical regimes, and also obtained their optimal regularity.  The dividing of $p$ according to the Sobolev embedding is as follows: (a) \textit{Subcritical: $1<p<\infty$ and $n=1,2$, or $1<p<\frac{n+2}{n-2}$ and $n\ge 3$}; (b) \textit{Critical: $p=\frac{n+2}{n-2}$ and $n\ge 3$}; (c) \textit{Supercritical: $p>\frac{n+2}{n-2}$ and $n\ge 3$.} 
We developed a regularity theory for  a linear singular parabolic equation and used it to bootstrap the regularity of weak solutions of the fast diffusion equation.  With the bootstrap argument, to establish \eqref{eq:bh} and the optimal regularity, it reduces to proving the
\be \label{eq:bootstrap-stp}
\mbox{H\"older continuity of } \frac{u}{d} \mbox{ on }\overline\Omega\times(0,T^*).
\ee  

In the setting of linear uniformly parabolic equations,  boundary estimates of type \eqref{eq:bootstrap-stp}  have been studied in, e.g., Krylov \cite{Kr}, Fabes-Garofalo-Salsa \cite{FGS} and Fabes-Safonov \cite{FabS}. For boundary estimates of solutions to certain degenerate or singular nonlinear  parabolic equations related to \eqref{eq:main}, we refer to the recent papers  Kuusi-Mingione-Nystr\"om \cite{KMN}, Avelin-Gianazza-Salsa \cite{AGS} and the references therein. In particular, Avelin-Gianazza-Salsa \cite{AGS} studied Carleson estimates and boundary Harnack inequality for local solutions of \eqref{eq:main} and \eqref{eq:main-d} with $1<p<\frac{n}{(n-2)^+}$, and it is mentioned in its Remark 4.10 that H\"older estimates for the gradient of the solutions, as well as \eqref{eq:bootstrap-stp}, were still unknown. 

We proved \eqref{eq:bootstrap-stp} in \cite{JX19} for $p$ in the  subcritical and critical regimes. A geometric type structure of the fast diffusion equation, inspired by  the Yamabe flow, was used to prove high integrabilities of $\pa_t u/u$, and then H\"older estimates of $D_x u$ implying \eqref{eq:bootstrap-stp}. The quantity $\pa_t u/u$ was also studied in Aronson-B\'enilan \cite{AB}. When $p=\frac{n+2}{n-2}$ and $n\ge 3$, it corresponds to the scalar curvature of the metrics along the Yamabe flow. The Yamabe flow was introduced by Hamilton \cite{Ham}, and its convergence on smooth compact Riemannian manifolds has been proved by  Ye \cite{Y}, Schwetlick-Struwe  \cite{SS} and Brendle \cite{Br05, Br07}.

In this paper, we establish  \eqref{eq:bootstrap-stp} for all $1<p<\infty$ in all dimensions. This is achieved by  applying  the iteration method of De Giorgi \cite{DG} to a very degenerate and very singular nonlinear parabolic type equation. Consequently, we completely solve the regularity problem \eqref{eq:bh} of Berryman and Holland \cite{BH}.

\begin{thm}\label{thm:main} Suppose $n\ge 1$ and $1<p<\infty$.  Let $u$ be a bounded nonnegative weak solution of \eqref{eq:main} and \eqref{eq:main-d}, and $T^*>0$ be its extinction time. 
 \begin{itemize}
\item If $p$ is an integer, then $u\in C^\infty(\overline \om\times (0,T^*))$. 
\item If $p$ is not an integer, then $u(x,\cdot)\in C^\infty(0,T^*)$ for every $x\in\overline\Omega$, and $ \pa_t^l u(\cdot,t) \in C^{2
+p}(\overline \om)$ for all $l\ge 0$ and all $t\in(0,T^*)$.  
\end{itemize}
\end{thm}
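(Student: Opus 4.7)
The strategy is to reduce Theorem \ref{thm:main} to the boundary H\"older estimate \eqref{eq:bootstrap-stp} via the bootstrap machinery from \cite{JX19}, and then to prove \eqref{eq:bootstrap-stp} for every $1<p<\infty$ by running De Giorgi's iteration on the nonlinear degenerate-singular equation satisfied by $w:=u/d$. On any time slab $[t_1,t_2]\subset(0,T^*)$ the spatial Lipschitz bound \eqref{eq:DKV} yields $1/C\le w\le C$. Writing $u=dw$ in a tubular neighborhood of $\pa\om$, the equation $p u^{p-1}\pa_t u=\Delta u$ becomes, after dividing by $d$,
\[
p\, d^{p-1}\, w^{p-1}\, \pa_t w \;=\; \Delta w \,+\, 2\, d^{-1}\nabla d\cdot\nabla w \,+\, d^{-1}\, w\,\Delta d,
\]
in which the coefficient $d^{p-1}$ of $\pa_t w$ degenerates on $\pa\om$ and the first-order term $2 d^{-1}\nabla d\cdot\nabla w$ is singular: this is the ``very degenerate and very singular'' equation alluded to in the introduction. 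Away from $\pa\om$ it is uniformly parabolic and classical interior estimates apply, so the task is to control the oscillation of $w$ up to $\pa\om$.

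For the boundary estimate I would carry out De Giorgi's iteration in a weighted setting. The main ingredients: (i) a Caccioppoli-type inequality for the truncations $(w-k)_\pm$ on parabolic cylinders touching $\pa\om$, with $d$-weighted cutoffs chosen so that the singular drift $2 d^{-1}\nabla d\cdot\nabla w$ is absorbed by integration by parts (using $|\nabla d|=1$ near $\pa\om$); (ii) a weighted Sobolev/Poincar\'e inequality, with weight a power of $d$, matched to the Caccioppoli estimate; (iii) a De Giorgi $L^2\to L^\infty$ lemma on level sets; and (iv) an oscillation-decay argument iterated on nested intrinsic parabolic cylinders whose time length is rescaled by $d^{p-1}$, in the spirit of DiBenedetto's intrinsic scaling. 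The two-sided bound \eqref{eq:DKV} keeps $w^{p-1}$ bounded away from $0$ and $\infty$, so the iteration is effectively driven by the \emph{linear} structure with degenerate weight $d^{p-1}$ on $\pa_t w$ and singular drift $d^{-1}\nabla d$. The principal obstacle is the geometric design: selecting the correct intrinsic cylinders and weighted functional inequalities so that each step produces a definite oscillation decay \emph{uniformly} as the cylinder approaches $\pa\om$, which is what delivers a quantitative H\"older exponent for $u/d$ that does not deteriorate at the boundary.

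Once H\"older continuity of $u/d$ is in hand, the rest follows from \cite{JX19}. Since $u/d$ is H\"older and $d$ is smooth, the coefficient $u^{p-1}\sim d^{p-1}$ is H\"older up to $\pa\om$, so $u$ solves a linear singular parabolic equation with H\"older coefficients and the Schauder-type theory of \cite{JX19} gives H\"older continuity of $\nabla u$ and $\pa_t u$, followed by higher regularity by iterating and differentiating the equation in $t$. When $p$ is a positive integer the nonlinearity $s\mapsto s^p$ is globally smooth and no boundary obstruction arises, so $u\in C^\infty(\overline\om\times(0,T^*))$. When $p$ is not an integer, the finite boundary regularity of $s\mapsto s^p$ at $s=0$ caps the spatial regularity of each $\pa_t^l u$ at $C^{2+p}(\overline\om)$ through the Schauder estimates, while smoothness in $t$ on $(0,T^*)$ is preserved by successive $t$-differentiation of the equation.
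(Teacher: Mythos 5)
Your overall strategy matches the paper: reduce to H\"older regularity of $u/d$, prove that by De Giorgi iteration on a degenerate-singular weighted equation, and then bootstrap via the Schauder theory of \cite{JX19}. You also correctly anticipate the main analytic ingredients (weighted Caccioppoli, weighted Sobolev/Poincar\'e matched to the weight structure, level-set iteration, oscillation decay on cylinders with time scaling $\sim R^{p+1}$, i.e.\ the extra factor $d^{p-1}$).

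However, there is a concrete gap in the decomposition you propose. You write $u = d\,w$, which produces
\[
p\, d^{p-1}\, w^{p-1}\, \pa_t w \;=\; \Delta w \,+\, 2\, d^{-1}\nabla d\cdot\nabla w \,+\, d^{-1}\, w\,\Delta d.
\]
The first-order term indeed combines with $\Delta w$ into $d^{-2}\mathrm{div}(d^2\nabla w)$ and can be treated by integration by parts, as you say. But the zeroth-order term $d^{-1} w\,\Delta d$ does \emph{not} disappear (generically $\Delta d\ne 0$, being tied to the curvature of $\pa\om$), and your proposal never addresses it. Inserting it into the Caccioppoli inequality contributes an extra term that is not present in any of the iteration lemmas, and a priori it does not have the right homogeneity to be absorbed by the degenerate weight structure. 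The paper avoids this entirely by a small but essential device: instead of dividing by $d$, it divides by the Green's function $G$ of $\mathcal{L}=-\Delta-b$ centered at a fixed interior point. Since $\mathcal{L}G=0$ away from the pole and $G\asymp d$ with $G/d$ smooth near $\pa\om$, setting $w=v/G$ kills the zeroth-order term identically and yields the clean divergence-form equation
\[
G^{p+1}\,\pa_t w^p \;=\; \mathrm{div}\big(A\,G^2\,\nabla w\big),
\]
which is precisely the model equation \eqref{eq:main3} to which the entire Section~\ref{sec:holder} iteration is calibrated. Without that cancellation your Caccioppoli estimate has an extra nonvariational source term, so the argument as stated does not close. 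A secondary, less serious point: you invoke ``intrinsic scaling'' in DiBenedetto's sense, but the cylinder geometry here ($Q_R^+ = B_R^+\times(-R^{p+1},0]$) is driven by the geometric weight $G^{p+1}\sim x_n^{p+1}$, not by the oscillation of the solution (which is harmless here since $\overline m\le w\le\overline M$ by \eqref{eq:DKV}); conflating the two may cause you to set up the wrong nested cylinders. Apart from these points, the step from H\"older of $u/d$ to the optimal regularity in Theorem \ref{thm:main} is correctly described and matches the paper's Theorem~\ref{thm:uttbound} and approximation argument.
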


A few remarks are as follows.

(i) The regularity in Theorem \ref{thm:main}  is optimal, which can be seen from the separable solutions 
\[
U(x,t):=\left(\frac{p-1}{p}\right)^{\frac{1}{p-1}} (T^*-t)^{\frac{1}{p-1}} S(x),
\] where $S$ is a positive solution of $-\Delta S=S^p$ in $\om$ and $S=0$ on $\pa \om$. Such $S$ exists if, for example,  $p$ is subcritical or $\om$ is an annulus.

 (ii) Kwong \cite{KwongY3}  obtained the same convergence result as in Holland-Berryman \cite{BH} bypassing the regularity  assumption \eqref{eq:bh}. Feireisl-Simondon \cite{FS} proved the  energy inequality
\[
\int_\om |\nabla u(\cdot,t_2)|^2- \int_\om |\nabla u(\cdot,t_1)|^2 + 2p\int^{t_2}_{t_1} \int_{\om }u^{p-1} |\pa_t u|^2  \,\ud x\ud t \le 0
\]
for $ 0<t_1<t_2<T^*$ without the regularity assumption \eqref{eq:bh}, which was then used to strengthen the convergence to be uniform in time, instead of just along a sequence of time. By Theorem \ref{thm:main}, this energy inequality now becomes an equality, since the derivation 
\[
\frac{\ud }{\ud t} \int_\om |\nabla u(\cdot,t)|^2= 2 \int_{\om} \nabla u \nabla \pa_t u = -2p \int_{\om }u^{p-1} |\pa_t u|^2
\]
by Berryman-Holland \cite{BH} is justified.

(iii)  In the subcritical regime, Bonforte-Grillo-V\'azquez \cite{BGV} proved the uniform convergence of the relative error, and Bonforte-Figalli \cite{BFig} proved the sharp exponential decay of the relative error on generic domains; see Akagi \cite{Akagi} for another proof. In our earlier papers  \cite{JX19, JX20}, we proved the sharp exponential decay of the relative error in the $C^2$ topology on generic domains, and polynomial decay of the relative error in the $C^2$ topology on all smooth domains. Recently, Choi-McCann-Seis \cite{CMS} proved that the relative error either decays exponentially with the sharp rate or else decays algebraically at a rate $1/t$ or slower. They also obtained higher order asymptotics of the relative error.

(iv) For the porous medium equation, that is the equation \eqref{eq:main} with $0<p<1$, the regularity of nonnegative solutions and the regularity of their free boundaries have been studied by, e.g., Aronson \cite{A, A70}, Aronson-B\'enilan \cite{AB},  Caffarelli-Friedman \cite{CF}, Caffarelli-V\'azquez-Wolanski \cite{CVW}, Caffarelli-Wolanski \cite{CW},  Daskalopoulos-Hamilton \cite{DH}, Koch \cite{Koch} and Daskalopoulos-Hamilton-Lee \cite{DHL}.

This paper is organized as follows. Section \ref{sec:inequality} is devoted to the inequalities that are needed for the De Giorgi iteration in our setting. In Section \ref{sec:holder}, we carry out the full details of the De Giorgi iteration for a degenerate and singular parabolic equation, and prove H\"older estimates of its solutions, which will lead to \eqref{eq:bootstrap-stp}. In Section \ref{sec:regularity}, we prove Theorem \ref{thm:main}.

\bigskip

\noindent \textbf{Acknowledgements:}  This work is dedicated to Professor YanYan Li on the occasion of his 60th birthday.    Both authors are deeply grateful to him for his thoughtful and thorough guidance and numerous supports.

\section{Some weighted inequalities}\label{sec:inequality}

We start with a weighted Sobolev inequality. Let $x=(x',x_n)\in\R^n$. 
\begin{prop}\label{prop:weightedsobolevinRn}
Let $n\ge 1$ and $p>0$. There exists $C=C(n,p)>0$ such that
\[
\left(\int_{\R^n}|u(x)|^{\frac{2(n+p+1)}{n+p-1}}|x_n|^{\frac{2n}{n+p-1}}\,\ud x\right)^{\frac{n+p-1}{n+p+1}}\le C \int_{\R^n}|\nabla u(x)|^2|x_n|^2\,\ud x
\]
for every Lipschitz continuous function $u$ on $\R^n$ with compact support.
\end{prop}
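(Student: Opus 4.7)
The inequality is a weighted Sobolev inequality of Caffarelli-Kohn-Nirenberg type on $\R^n$ with partial power weights $|x_n|^{2n/(n+p-1)}$ on the function side and $|x_n|^{2}$ on the gradient side. A first sanity check is that both sides scale as $\lambda^{-n}$ under $x \mapsto \lambda x$, and the critical exponent $\tfrac{2(n+p+1)}{n+p-1}=\tfrac{2N}{N-2}$ with $N := n+p+1$ identifies an ``effective Sobolev dimension'' $n+p+1$. By density and symmetry across $\{x_n=0\}$ I would reduce to smooth $u \ge 0$ supported in the half-space $\R^n_+ = \{x_n > 0\}$.

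The key reduction is via the substitution $v(x) := u(x)\, x_n$. A direct integration by parts (with no boundary term since $v|_{x_n=0} = 0$) shows
\[
\int_{\R^n_+} |\nabla v|^2\,\ud x \;=\; \int_{\R^n_+} |\nabla u|^2\, x_n^2\,\ud x,
\]
because the choice of power $1$ makes the Hardy-type cross term cancel exactly. Simultaneously, $\int u^Q x_n^{\alpha}\,\ud x = \int v^Q x_n^{\alpha-Q}\,\ud x = \int v^Q x_n^{-\beta}\,\ud x$ with $\beta := \tfrac{2(p+1)}{n+p-1}$ and $Q := \tfrac{2(n+p+1)}{n+p-1}$, by a short algebraic check. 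Therefore the target reduces to the Hardy-Sobolev-type estimate
\[
\Bigl(\int_{\R^n_+} v^{Q} x_n^{-\beta}\,\ud x\Bigr)^{2/Q} \le C \int_{\R^n_+} |\nabla v|^2\,\ud x,
\]
for $v$ with $v|_{x_n=0}=0$ (and in fact vanishing linearly, since $v = u\, x_n$). I would prove this Hardy-Sobolev inequality by a slicing argument: apply the $(n-1)$-dimensional Sobolev inequality to $v(\cdot, x_n)$ on $x'$-slices and combine it via H\"older with a one-dimensional weighted inequality in $x_n$ that exploits the Dirichlet condition $v|_{x_n = 0} = 0$, choosing exponents to reproduce the target $Q$ and $\beta$. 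For integer $p$ an alternative more geometric route is the axial extension $\tilde v(x', y) := v(x', |y|)/|y|$ on $\R^{n-1} \times \R^{p+2}$, which translates the target into the standard Sobolev embedding in $\R^{n+p+1}$.

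The main obstacle I foresee is handling the regime $\beta \ge 1$ (i.e.\ $p \ge n-3$), where the weight $|x_n|^{-\beta}$ fails to be locally integrable on $\R^n_+$ and the left-hand side integral converges only because of the linear vanishing of $v$ at $\{x_n=0\}$. The cleanest way around this is to rewrite $\int v^Q |x_n|^{-\beta}\,\ud x = \int u^Q |x_n|^{Q-\beta}\,\ud x$ with $Q-\beta = \alpha = \tfrac{2n}{n+p-1} > 0$, so that the integral is manifestly convergent, and then prove the CKN-type inequality directly in the $u$ variables. The low-dimensional cases ($n \le 3$) require minor modifications (e.g.\ using Morrey-type inequalities in place of the $(n-1)$-dimensional Sobolev embedding), but the structure of the proof remains the same; pinning down the precise bookkeeping of exponents in the H\"older step and verifying uniformity in $p$ is where the proof requires the most care.
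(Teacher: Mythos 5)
The paper's own ``proof'' is merely a citation (Maz'ya, Corollary~2 in Section~2.1.7; Castro for $n=1$; Dou--Sun--Wang--Zhu for $n\ge 2$), so your attempt at a self-contained argument is genuinely different and worth assessing on its own terms.

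The core of your reduction is correct and elegant. The identity $\int_{\R^n_+}|\nabla(u\,x_n)|^2\,\ud x=\int_{\R^n_+}|\nabla u|^2\,x_n^2\,\ud x$ holds by exactly the integration-by-parts you indicate, and together with $\int u^{Q}x_n^{\alpha}=\int v^{Q}x_n^{-\beta}$ (with $Q=\tfrac{2(n+p+1)}{n+p-1}$, $\alpha=\tfrac{2n}{n+p-1}$, $\beta=Q-\alpha=\tfrac{2(p+1)}{n+p-1}$) this reduces the statement to the half-space Hardy--Sobolev inequality
\[
\Bigl(\int_{\R^n_+}|v|^{Q}x_n^{-\beta}\,\ud x\Bigr)^{2/Q}\le C\int_{\R^n_+}|\nabla v|^2\,\ud x,\qquad v\big|_{x_n=0}=0,
\]
whose exponent relation $Q=\tfrac{2(n-\beta)}{n-2}$ you correctly (implicitly) recover. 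For $n\ge 3$ this follows cleanly by H\"older-interpolating between the Hardy inequality $\int v^2 x_n^{-2}\le 4\int|\nabla v|^2$ and the Sobolev inequality $\|v\|_{L^{2^*}}^2\lesssim\int|\nabla v|^2$, writing $v^{Q}x_n^{-\beta}=|v|^{Q-\beta}\cdot(|v|/x_n)^{\beta}$ with conjugate exponents $\tfrac{2^*}{Q-\beta}$ and $\tfrac{2}{\beta}$; this is simpler and tighter than the slicing route you sketch, and works precisely because $\beta<2$ when $n\ge 3$.

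The real gap is $n=1,2$, and I do not think it is a matter of ``minor modifications.'' For $n=2$ one has $\beta=2$ exactly, so the Hardy--Sobolev exponent relation $Q=\tfrac{2(n-\beta)}{n-2}$ degenerates to $0/0$ and the interpolation argument collapses (there is no finite Sobolev exponent in two dimensions to interpolate against). For $n=1$ one has $\beta=2+\tfrac{2}{p}>2$, which is outside the Hardy--Sobolev range altogether; moreover there are no $x'$-slices at all, so the slicing scheme is vacuous, and if you undo the substitution by setting $w=v/x$ on $\R_+$ you recover exactly the original inequality --- the reduction is circular in one dimension. These two cases need a genuinely separate argument (which is why the paper cites Castro specifically for $n=1$), not a Morrey-type patch. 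Also, your worry about non-integrability of $x_n^{-\beta}$ when $\beta\ge 1$ is a red herring: since $v=u\,x_n$ vanishes linearly, $v^Q x_n^{-\beta}\sim x_n^{\alpha}$ near $\{x_n=0\}$ is always locally integrable; the actual obstruction is $\beta\ge 2$.

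Finally, the axial extension. As written, $\tilde v(x',y)=v(x',|y|)/|y|$ on $\R^{n-1}\times\R^{p+2}$ only reproduces the correct weight when $p=1$. The right choice is $\tilde v(x',y)=v(x',|y|)/|y|^{(p+1)/2}$ on $\R^{n-1}\times\R^{p+2}$: one computes
\[
\int_{\R^{n+p+1}}|\nabla\tilde v|^2=\sigma_{p+2}\int_{\R^n_+}\Bigl(|\nabla v|^2+\tfrac{(p+1)(p-1)}{4}\,\tfrac{v^2}{x_n^2}\Bigr)\ud x\le C\int_{\R^n_+}|\nabla v|^2
\]
by Hardy, and $\int\tilde v^{Q}=\sigma_{p+2}\int v^{Q}x_n^{-\beta}$ with the right $\beta$, so the standard Sobolev inequality in $\R^{n+p+1}$ does the job for every integer $p\ge 1$ and every $n\ge 1$. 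With that exponent corrected this alternative route is sound for integer $p$; for non-integer $p$ it is of course unavailable (there is no integer ambient dimension), which is precisely why the Maz'ya/CKN framework is invoked in the paper.
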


\begin{proof} It follows directly from Corollary 2 in Section 2.1.7 of Maz'ya \cite{Mazya}. Although it is stated for $n\ge 3$ there, its proof also applies to obtain the above inequality when $n=1,2$. See also  Proposition 1 of Castro \cite{Castro} for $n=1$ and Corollary 1.2 of Dou-Sun-Wang-Zhu \cite{DSWZ} for $n\ge 2$.
\end{proof}

For $R>0$, we denote
\[
B_R^+=B_R\cap\{x_n>0\},\quad \partial' B_R^+=(\partial B_R^+)\cap \{x_n=0\}, \quad \partial'' B_R^+=(\partial B_R^+)\cap \{x_n>0\}.
\]
For a  function $u$ that is Lipschitz continuous on $\overline{B_R^+}\times(-T,0]$, and $p>0$, we define
 \[
  \|u\|_{V_0^1(B_R^+\times (-T,0])}=\sqrt{ \sup_{-T<t<0} \int_{B_R^+ }u(x,t)^2 x_n^{p+1}\,\ud x +  \int_{-T}^0\int_{B_R^+}  |\nabla u(x,t)|^2x_n^2\,\ud x\ud t}.
\]
Then we have the following Sobolev inequality in the parabolic setting.

\begin{lem}\label{lem:weightedsobolev} 
Let $n\ge 1$, $p> 0$, $R>0$ and $T>0$.
For every function $u$ that is Lipschitz continuous on $\overline{B_R^+}\times(-T,0])$ and vanishes on $\partial'' B_R^+\times (-T,0]$, we have
 \[
 \Big(\int_{B_R^+ \times (-T,0]}  |u(x,t)|^{2\chi}x_n^2\,\ud x \ud t \Big)^{\frac{1}{\chi}} \le C \|u\|_{V_0^1(B_R^+\times (-T,0])}^2,
 \]
 where $\chi =\frac{n+p+3}{n+p+1} >1$, and $C>0$ depends only on $n$ and $p$.
\end{lem}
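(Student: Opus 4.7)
The plan is to combine Proposition \ref{prop:weightedsobolevinRn} (a pure spatial inequality) with an interpolation inequality at each time slice, in the spirit of the classical Ladyzhenskaya--Solonnikov parabolic Sobolev inequality. The key parameter matching will force the exponent on the right-hand side to collapse to exactly $\|u\|_{V_0^1}^{2\chi}$ because $\chi$ has been chosen specifically to fit the weights $x_n^2$ and $x_n^{p+1}$.

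First, I will apply Proposition \ref{prop:weightedsobolevinRn} at almost every fixed time $t$. Since $u(\cdot,t)$ vanishes on $\pa'' B_R^+$, the even reflection $\tilde u(x',x_n):=u(x',|x_n|)$, extended by zero outside $B_R$, is Lipschitz on $\R^n$ with compact support. Applying Proposition \ref{prop:weightedsobolevinRn} to $\tilde u$ and using the evenness of the weight yields, for a.e.\ $t\in(-T,0]$,
\[
\left(\int_{B_R^+}|u(x,t)|^{q}\,x_n^{\frac{2n}{n+p-1}}\,\ud x\right)^{2/q}\le C\int_{B_R^+}|\nabla u(x,t)|^2 x_n^2\,\ud x,\qquad q:=\frac{2(n+p+1)}{n+p-1}.
\]

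Next, I will interpolate at each time slice via H\"older's inequality, writing
\[
|u|^{2\chi} x_n^2 = \bigl(|u|^2 x_n^{p+1}\bigr)^{\alpha}\cdot \bigl(|u|^q x_n^{\frac{2n}{n+p-1}}\bigr)^{1-\alpha},
\]
where $\alpha$ is chosen so that both the exponent of $|u|$ and the exponent of $x_n$ balance; a short computation gives $\alpha=\tfrac{2}{n+p+1}$ (one checks that with this choice $(p+1)\alpha+\tfrac{2n}{n+p-1}(1-\alpha)=2$ and $2\alpha+q(1-\alpha)=2\chi$). The crucial coincidence is that $1-\alpha=\tfrac{n+p-1}{n+p+1}=\tfrac{2}{q}$, so applying H\"older with exponents $1/\alpha$ and $1/(1-\alpha)$ and then inserting the spatial inequality from the first step yields
\[
\int_{B_R^+}|u(x,t)|^{2\chi}x_n^2\,\ud x \le C\left(\int_{B_R^+} u(x,t)^2 x_n^{p+1}\,\ud x\right)^{\!\alpha}\int_{B_R^+}|\nabla u(x,t)|^2 x_n^2\,\ud x.
\]

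Finally, I integrate in $t$, pulling the supremum out of the first factor:
\[
\iint_{B_R^+\times(-T,0]}|u|^{2\chi}x_n^2\,\ud x\ud t\le C\left(\sup_{-T<t<0}\int_{B_R^+}u^2 x_n^{p+1}\,\ud x\right)^{\!\alpha}\!\iint_{B_R^+\times(-T,0]}|\nabla u|^2 x_n^2\,\ud x\ud t\le C\|u\|_{V_0^1(B_R^+\times(-T,0])}^{2(1+\alpha)}.
\]
Since $1+\alpha=1+\tfrac{2}{n+p+1}=\tfrac{n+p+3}{n+p+1}=\chi$, taking the $1/\chi$ power yields the stated inequality. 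I expect the only mildly delicate step to be the reflection/extension argument needed to legitimately invoke Proposition \ref{prop:weightedsobolevinRn}, together with the bookkeeping that makes the H\"older exponents match the weights exactly; beyond that, the proof is mechanical once one observes the coincidence $1-\alpha=2/q$.
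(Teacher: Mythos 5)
Your argument is correct and follows essentially the same route as the paper's proof: apply Proposition \ref{prop:weightedsobolevinRn} via even reflection at each time slice, interpolate at each slice via H\"older's inequality with the same exponent split (your $\alpha=\chi-1$, $1-\alpha=2-\chi$), obtain the pointwise-in-time bound $\int_{B_R^+}|u|^{2\chi}x_n^2\,\ud x\le C\bigl(\int u^2x_n^{p+1}\bigr)^{\alpha}\int|\nabla u|^2x_n^2\,\ud x$, and then integrate in time. The only cosmetic differences are the order of the two applications (Sobolev then H\"older versus H\"older then Sobolev, which commute here) and the last line, where you bound both factors directly by $\|u\|_{V_0^1}^2$ instead of invoking Young's inequality after taking the $1/\chi$ power; both are valid.
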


\begin{proof} 
By the H\"older inequality, we have
\begin{align*}
\int_{B_R^+  }x_n^2|u|^{2\chi } \,\ud x 
&=\int_{B_R^+  }|u|^{2} x_n^{2- (p+1)(\chi-1)} |u|^{2(\chi-1)} x_n^{(p+1)(\chi-1)} \,\ud x  \\
&\le \Big(\int_{B_R^+} |u|^{\frac{2}{2-\chi}} x_n^\frac{2- (p+1)(\chi-1)}{2-\chi} \,\ud x\Big)^{2-\chi} \Big( \int_{B_R^+ } u^2 x_n^{p+1}\,\ud x\Big)^{\chi-1} \\
&=\Big(\int_{B_R^+}|u|^{\frac{2(n+p+1)}{n+p-1}}|x_n|^{\frac{2n}{n+p-1}}\,\ud x\Big)^{\frac{n+p-1}{n+p+1}}
\Big( \int_{B_R^+ } u^2 x_n^{p+1}\,\ud x\Big)^{\frac{2}{n+p+1}}. 
\end{align*}
By making an even extension of $u$ for $x_n<0$, and applying Proposition \ref{prop:weightedsobolevinRn}, we obtain
\[
\int_{B_R^+  }|u|^{2\chi }x_n^2 \,\ud x \le C \Big(\int_{B_R^+  } |\nabla u|^2x_n^2\,\ud x\Big)   \Big( \int_{B_R^+ } u^2 x_n^{p+1}\,\ud x\Big)^{\frac{2}{n+p+1}} .
\]
Integrating the above inequality in $t$, we have
\begin{align*}
&\Big(\int_{-T}^0\int_{B_R^+  }|u(x,t)|^{2\chi} x_n^2\,\ud x \ud t\Big)^{\frac{1}{\chi}}\\
& \le C  \sup_{-T<t<0}\Big( \int_{B_R^+  }u^2 x_n^{p+1}\,\ud x\Big)^{\frac{2}{n+p+3}} \Big(\int_{B_R^+ \times[-T,0]} |\nabla u|^2x_n^2 \,\ud x \ud t  \Big)^{\frac{n+p+1}{n+p+3}}\\&
\le C \Big( \int_{B_R^+ \times[-T,0]}x_n^2 |\nabla u|^2 \,\ud x \ud t  + \sup_{-T<t<0} \int_{B_R^+  }u^2 x_n^{p+1}\,\ud x \Big),
\end{align*}
where we have used Young's inequality in the last inequality.
\end{proof}

The next inequality is a Poincar\'e inequality, which follows from Theorem 1.1 and Theorem 1.2 of Chua-Wheeden \cite{ChuaW}.
\begin{prop}\label{prop:weightedpoincare}
Let $n\ge 1$, $p>0$, and $r>0$. Then
\[
\int_{B_r^+}|u(x)-\bar u|x_n^{p+1}\,\ud x\le r \int_{B_r^+}|\nabla u(x)|x_n^{p+1}\,\ud x,
\]
for all Lipschitz continuous function $u$ on $B_r^+$, where 
\[
\bar u=\frac{\int_{B_r^+} u x_n^{p+1}\,\ud x}{\int_{B_r^+} x_n^{p+1}\,\ud x}.
\]
\end{prop}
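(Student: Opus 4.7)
The plan is to invoke the weighted $(1,1)$-Poincar\'e inequality of Chua-Wheeden \cite{ChuaW} directly, since the entire content of the statement is a specialization of their abstract framework to this specific weight and domain. I would proceed in two steps.

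First, I would reduce to the unit scale. Setting $\tilde u(y) = u(ry)$, the substitution $x = ry$ makes both sides of the claimed inequality transform as $r^{n+p+1}$: the explicit factor $r$ on the right precisely compensates for $\nabla_x u = r^{-1}\nabla_y \tilde u$, since one factor of $r$ is absorbed in converting $\ud x = r^n\,\ud y$ together with $x_n^{p+1} = r^{p+1}y_n^{p+1}$, and the remaining mismatch in the gradient term is exactly $r^{-1}$. Hence it suffices to prove the case $r = 1$:
\[
\int_{B_1^+} |u - \bar u|\, y_n^{p+1}\,\ud y \;\le\; \int_{B_1^+} |\nabla u|\, y_n^{p+1}\,\ud y.
\]

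Second, I would verify the hypotheses of Theorems 1.1 and 1.2 of \cite{ChuaW}. The half-ball $B_1^+$ is convex and therefore a John (Boman-chain) domain with the good geometric structure required by Chua-Wheeden. The weight $w(y) = y_n^{p+1}$ is locally integrable and doubling on $B_1^+$: for a ball $B_\rho(y_0) \subset B_1^+$ with $y_0 = (y_0', h)$, one has $w(B_\rho(y_0)) \asymp (h+\rho)^{p+1}\rho^n$, which quantitatively delivers the $(1,1)$-balance and doubling conditions with constants depending only on $n$ and $p$. Feeding these structural inputs into the Chua-Wheeden theorems produces a weighted $(1,1)$-Poincar\'e inequality on $B_1^+$ with constant proportional to the diameter.

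The main (and essentially only) obstacle is tracking the explicit numerical constant, so that it comes out to exactly $r$ rather than a larger $C(n,p)\cdot r$. This requires reading off the sharp form of Chua-Wheeden's constant for the convex case; alternatively, since in the subsequent De Giorgi iteration only a bound of the form $C(n,p)\cdot r$ is actually used, one can afford a suboptimal constant and absorb the loss into downstream estimates. Either way, no further analytic work beyond the cited theorems is needed.
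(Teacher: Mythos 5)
Your proposal matches the paper's approach exactly: the paper's entire proof is the one-line statement that the inequality ``follows from Theorem 1.1 and Theorem 1.2 of Chua-Wheeden \cite{ChuaW}.'' Your additional remarks on rescaling to $r=1$, verifying that $B_1^+$ is convex and $y_n^{p+1}$ is a doubling weight, and noting that a constant $C(n,p)\,r$ would suffice downstream, are all sound elaborations of the same citation-based argument.
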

The last inequality is a De Giorgi type isoperimetric inequality.

\begin{prop}\label{prop:degiorgiisoperimetricelliptic}
Let  $n\ge 1$, $p>0$, $k<\ell, r>0$ and $u$ be a Lipschitz continuous function on $B_r^+$. There exists a positive constant $C$ depending only on $n$ and $p$ such that
\begin{align*}
&(\ell-k)\int_{\{u\ge l\}\cap B_r^+}  x_n^{p+1}\,\ud x\int_{\{u\le k\}\cap B_r^+} x_n^{p+1}\,\ud x\\
&\quad\le C r^{n+p+2} \left(\int_{\{k<u<\ell\}\cap B_r^+}|\nabla u|^2x_n^2\,\ud x\right)^{\frac 12} \left(\int_{\{k<u<\ell\}\cap B_r^+}x_n^{2p}\,\ud x\right)^{\frac 12}.
\end{align*}
\end{prop}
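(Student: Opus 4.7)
The plan is to adapt the classical De Giorgi isoperimetric argument to the weighted setting, using the weighted Poincar\'e inequality of Proposition~\ref{prop:weightedpoincare} in place of the standard one. Let $A_\ell=\{u\ge\ell\}\cap B_r^+$, $A_k^c=\{u\le k\}\cap B_r^+$, and write $\mu(E)=\int_E x_n^{p+1}\,\ud x$. Define the truncation
\[
w=\min\bigl(\ell-k,\,(u-k)_+\bigr),
\]
so that $w=0$ on $A_k^c$, $w=\ell-k$ on $A_\ell$, and $|\nabla w|=|\nabla u|\,\mathbf 1_{\{k<u<\ell\}}$. Denote by $\bar w\in[0,\ell-k]$ its $\mu$-weighted mean on $B_r^+$ from Proposition~\ref{prop:weightedpoincare}.

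Next, I would apply Proposition~\ref{prop:weightedpoincare} to obtain
\[
\int_{B_r^+}|w-\bar w|\,x_n^{p+1}\,\ud x\le r\int_{\{k<u<\ell\}\cap B_r^+}|\nabla u|\,x_n^{p+1}\,\ud x.
\]
The left side is at least $(\ell-k-\bar w)\mu(A_\ell)+\bar w\,\mu(A_k^c)$, since $w$ is identically $\ell-k$ on $A_\ell$ and identically $0$ on $A_k^c$. Writing $x_n^{p+1}=x_n\cdot x_n^p$ and applying the Cauchy--Schwarz inequality to the right side yields
\[
(\ell-k-\bar w)\mu(A_\ell)+\bar w\,\mu(A_k^c)\le r\left(\int_{\{k<u<\ell\}\cap B_r^+}|\nabla u|^2 x_n^2\,\ud x\right)^{1/2}\left(\int_{\{k<u<\ell\}\cap B_r^+}x_n^{2p}\,\ud x\right)^{1/2}.
\]

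Now I would split into two cases according to whether $\bar w\ge\tfrac{\ell-k}{2}$ or $\bar w\le\tfrac{\ell-k}{2}$. In the first case, dropping the nonnegative $(\ell-k-\bar w)\mu(A_\ell)$ term leaves $\tfrac{\ell-k}{2}\mu(A_k^c)$ on the left, which I then multiply through by $\mu(A_\ell)$; in the second case, I symmetrically retain $(\ell-k-\bar w)\mu(A_\ell)\ge\tfrac{\ell-k}{2}\mu(A_\ell)$ and multiply through by $\mu(A_k^c)$. In either case I use the elementary estimate $\mu(A_\ell),\,\mu(A_k^c)\le\mu(B_r^+)\le C r^{n+p+1}$ (obtained by a direct calculation $\int_0^r x_n^{p+1}(r^2-x_n^2)^{(n-1)/2}\ud x_n=C r^{n+p+1}$), so that the factor $r$ from Poincar\'e combines with $r^{n+p+1}$ to give the claimed $r^{n+p+2}$.

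I do not expect serious obstacles: the argument is essentially bookkeeping once one chooses the truncation and splits the weight $x_n^{p+1}=x_n\cdot x_n^p$ correctly for Cauchy--Schwarz. The only mild subtlety is verifying that the Poincar\'e inequality in Proposition~\ref{prop:weightedpoincare} applies to the Lipschitz truncation $w$, which is standard since truncation preserves Lipschitz regularity and the weighted mean $\bar w$ lies in $[0,\ell-k]$, making the two-case dichotomy work.
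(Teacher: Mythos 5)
Your proposal is correct and takes essentially the same approach as the paper: the same truncation of $u$ between levels $k$ and $\ell$, the weighted Poincar\'e inequality of Proposition~\ref{prop:weightedpoincare}, the split $x_n^{p+1}=x_n\cdot x_n^p$ before Cauchy--Schwarz, and the bound $\mu(B_r^+)\le Cr^{n+p+1}$. The only cosmetic difference is that the paper avoids your two-case dichotomy by retaining just the term $\bar w\,\mu(A_k^c)$ and then bounding $\bar w$ from below by $(\ell-k)\mu(A_\ell)/\mu(B_r^+)$ directly (using that $w\equiv\ell-k$ on $A_\ell$), which yields the product $\mu(A_\ell)\mu(A_k^c)$ in one step.
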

\begin{proof}
Let
\[
v=\sup(k, \inf(u,\ell))-k,\quad \bar v=\frac{\int_{B_r^+} v x_n^{p+1}\,\ud x}{\int_{B_r^+} x_n^{p+1}\,\ud x}.
\]
Then
\begin{align*}
\int_{\{v=0\}\cap B_r^+}\bar v x_n^{p+1}\,\ud x
&\le \int_{B_r^+}|v(x)-\bar v| x_n^{p+1}\,\ud x\\
&\le r \int_{B_r^+}|\nabla v(x)|x_n^{p+1} \,\ud x\\
&= r \int_{\{k<u<\ell\}\cap B_r^+}|\nabla u(x)|x_n^{p+1} \,\ud x\\
&\le r \left(\int_{\{k<u<\ell\}\cap B_r^+}|\nabla u|^2x_n^2\,\ud x\right)^{\frac 12} \left(\int_{\{k<u<\ell\}\cap B_r^+}x_n^{2p}\,\ud x\right)^{\frac 12}.
\end{align*}
On the other hand, we have
\begin{align*}
\int_{\{v=0\}\cap B_r^+}\bar v x_n^{p+1}\,\ud x&=\frac{\int_{B_r^+} v x_n^{p+1}\,\ud x}{\int_{B_r^+} x_n^{p+1} \,\ud x}\cdot \int_{\{u\le k\}\cap B_r^+} x_n^{p+1} \,\ud x\\
& \ge \frac{(\ell-k)\int_{\{u\ge l\}\cap B_r^+}  x_n^{p+1} \,\ud x}{\int_{B_r^+} x_n^{p+1} \,\ud x}\cdot \int_{\{u\le k\}\cap B_r^+} x_n^{p+1} \,\ud x\\
&\ge C r^{-n-p-1} (\ell-k)\int_{\{u\ge l\}\cap B_r^+}  x_n^{p+1}\,\ud x\int_{\{u\le k\}\cap B_r^+} x_n^{p+1}\,\ud x.
\end{align*}
Hence, the conclusion follows. 
\end{proof}

\section{H\"older estimates for a  nonlinear parabolic equation with weights}\label{sec:holder}

Let $G$ be a bounded function on $B_1^+$ satisfying  
\be\label{eq:G}
\frac{1}{\Lda} x_n \le G(x)\le \Lda x_n \quad \mbox{for }x\in B_1^+,
\ee
and $A(x)= (a_{ij}(x))_{n\times n}$ be a bounded matrix valued function in $B_1^+$ satisfying
\begin{equation}\label{eq:ellipticity}
\frac{1}{\Lda} |\xi|^2 \le \sum_{i,j=1}^n a_{ij}(x)\xi_i\xi_j \le \Lda |\xi|^2 \quad\forall\,\xi\in\R^n,\ x\in B_1^+, 
\end{equation} 
where $\Lda \ge 1$ is constant.  

For $p\ge 1$, we consider positive bounded solutions of 
\begin{equation}\label{eq:main3}
G^{p+1} \partial_t w^p=\mbox{div}(A G^2\nabla w) \quad\mbox{in }B_1^+ \times (-1, 0]
\end{equation} 
satisfying 
\be\label{eq:lo-up}
\overline m\le w\le \overline M \quad \mbox{in }B_1^+  \times (-1, 0]\ \  \mbox{for some }0<\overline m\le \overline M<\infty.
\ee 
The equation \eqref{eq:main3} will be understood in the sense of distribution, and we are interested in the a priori H\"older estimates of its solutions that are Lipschitz continuous in $\overline B_1^+\times(-1,0]$. This Lipschitz continuity is assumed only for simplicity to avoid introducing more notations, and it is enough for our purpose. The H\"older estimate proved in the below clearly holds for properly defined weak solutions using approximations. 
Since the  operator $\mbox{div}(A G^2\nabla\,\cdot )$ is very degenerate near the boundary $\pa' B_1^+$, no boundary condition on $\pa' B_1^+$ should be  imposed; see Keldys \cite{Keldys},  Oleinik-Radkevic \cite{OR} and the more recent paper Wang-Wang-Yin-Zhou \cite{WWYZ}. 

The main result of this section is as follows. 

\begin{thm}\label{thm:holdernearboundary} 
Let $p\ge1$.  Suppose \eqref{eq:G} and \eqref{eq:ellipticity} hold. Suppose $w$ is Lipschitz continuous in $\overline B_1^+\times(-1,0]$, satisfies \eqref{eq:lo-up}, and is a solution of \eqref{eq:main3} in the sense of distribution. Then there exist $\gamma>0$ and $C>0$, both of which depend only on $n$, $p$, $\Lambda$, $\overline m$ and $\overline M$, such that
\[
|w(x,t)-w(y,s)|\le C (|x-y|+|t-s|)^\gamma\quad\forall\,(x,t), (y,s)\in \overline B_{1/2}^+\times(-1/4,0].
\]
\end{thm}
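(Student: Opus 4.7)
The plan is to run a De~Giorgi iteration adapted to the weighted degenerate-singular structure of \eqref{eq:main3}, using Lemma~\ref{lem:weightedsobolev}, Proposition~\ref{prop:weightedpoincare}, and Proposition~\ref{prop:degiorgiisoperimetricelliptic}. Since $\overline m\le w\le\overline M$, I expand $\pa_t w^p = p\,w^{p-1}\pa_t w$ and regard $b(x,t):=pw^{p-1}$ as a measurable coefficient bounded between positive constants depending on $p,\overline m,\overline M$; combined with \eqref{eq:G} this puts the equation in a weighted form
\[
b(x,t)\,x_n^{p+1}\,\pa_t w \,=\, \mathrm{div}\bigl(\tilde A(x)\,x_n^2\,\nabla w\bigr),
\]
with $\tilde A$ uniformly elliptic. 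The natural parabolic scaling is then $t\sim r^{p+1}$, $x\sim r$, so I work in cylinders $Q_r(x_0,t_0):=(B_r(x_0)\cap\{x_n>0\})\times(t_0-r^{p+1},t_0]$ centered at points of $\overline{B_{1/2}^+}\times(-1/4,0]$, and endow space-time with the weighted measure $\ud\mu=x_n^{p+1}\,\ud x\,\ud t$.

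First I would derive the Caccioppoli inequality. Testing \eqref{eq:main3} against $\eta^2(w-k)_{\pm}$ for a smooth cutoff $\eta$ that vanishes on $\pa'' B_r^+$ (but not on $\pa' B_r^+$), and using that $(w-k)_{\pm} p w^{p-1}\pa_t w = \pa_t \Phi_{k,\pm}(w)$ with $\Phi_{k,\pm}(w):=\int_k^{w}(s-k)_{\pm}\,ps^{p-1}\,\ud s$ comparable to $(w-k)_{\pm}^2$, one arrives at
\[
\sup_{t}\int_{B_r^+}\eta^2(w-k)_{\pm}^2 x_n^{p+1}\,\ud x+\iint_{Q_r}\eta^2|\nabla(w-k)_{\pm}|^2 x_n^2\,\ud x\,\ud t
\]
\[
\le C\iint_{Q_r}\bigl(|\nabla\eta|^2 x_n^2+|\pa_t\eta^2|\,x_n^{p+1}\bigr)(w-k)_{\pm}^2\,\ud x\,\ud t.
\]
The left-hand side is precisely $\|\eta(w-k)_{\pm}\|^2_{V_0^1}$ in the notation of Section~\ref{sec:inequality}, so Lemma~\ref{lem:weightedsobolev} applies. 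A standard geometric iteration on levels $k_j=k+(1-2^{-j})\ell$ and cylinders $Q_{r_j}$ shrinking to $Q_{r/2}$ then yields the \emph{First De~Giorgi Lemma}: there exists $\nu_0>0$, depending only on $n,p,\Lambda,\overline m,\overline M$, such that if $w\le M$ on $Q_r$ and $\mu(\{w>k\}\cap Q_r)\le\nu_0\,\mu(Q_r)$, then $w\le M-(M-k)/2$ on $Q_{r/2}$, and a symmetric statement for lower levels.

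Next I would prove oscillation decay. Let $\omega=\mathrm{osc}_{Q_r}w$. After possibly replacing $w$ by $M-w$, assume the weighted measure of $\{w\le M-\omega/2\}$ in some time slab of $Q_r$ is a fixed fraction of $\mu(B_r^+)$. Applying Proposition~\ref{prop:degiorgiisoperimetricelliptic} at each time slice to the intermediate level sets $E_j(t):=\{M-\omega/2^{j}<w(\cdot,t)<M-\omega/2^{j+1}\}$, one obtains a lower bound on
\[
\sum_{j=1}^{j_0}\Bigl(\iint_{E_j\cap Q_r}|\nabla w|^2 x_n^2\,\ud x\,\ud t\Bigr)^{1/2}\mu(E_j\cap Q_r)^{1/2}
\]
in terms of $\omega$ and fixed powers of $r$, while the Caccioppoli estimate of Step~1 bounds the $|\nabla w|^2 x_n^2$ integral uniformly. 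A pigeonhole in $j$ then produces some level $k_\star = M-\omega/2^{j_\star}$ at which $\mu(\{w>k_\star\}\cap Q_r)\le\nu_0\,\mu(Q_r)$, and the First De~Giorgi Lemma upgrades this to $w\le k_\star+\omega/2^{j_\star+1}$ on $Q_{r/2}$, giving $\mathrm{osc}_{Q_{r/2}}w\le\theta\,\omega$ for some $\theta\in(0,1)$ depending only on $n,p,\Lambda,\overline m,\overline M$.

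Iterating this oscillation decay on dyadically shrinking cylinders around any point of $\overline{B_{1/2}^+}\times(-1/4,0]$ yields H\"older continuity with exponent $\gamma=\log_2(1/\theta)>0$ at every such point; in the interior region $\{x_n\ge\delta\}$ the equation is uniformly parabolic and the classical De Giorgi--Nash--Moser theory applies, and a standard covering argument combines the two into the claimed global estimate on $\overline B_{1/2}^+\times(-1/4,0]$. \textbf{The main obstacle} is the oscillation-decay step: Proposition~\ref{prop:degiorgiisoperimetricelliptic} is purely spatial at a fixed time, but the First De~Giorgi Lemma demands a smallness estimate on the space-time weighted measure, so I must propagate the spatial information through time via Step~1 and carefully juggle the two weights $x_n^{p+1}$ and $x_n^2$ together with the nonstandard parabolic time scale $r^{p+1}$ so that the pigeonhole eventually lands below the threshold $\nu_0$.
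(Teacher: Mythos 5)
Your overall architecture matches the paper's: a weighted Caccioppoli estimate (Lemma~\ref{lem:degiorgiclass}), a ``First De Giorgi Lemma'' driven by the weighted Sobolev inequality of Lemma~\ref{lem:weightedsobolev} (these are Lemmas~\ref{lem:smallonlargeset} and~\ref{lem:smallonlargeset2}), and a pigeonhole built on Proposition~\ref{prop:degiorgiisoperimetricelliptic}. But the obstacle you flag at the end is precisely where the paper's nontrivial work lies, and you do not fill it. Two concrete gaps.

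\emph{Propagation in time.} You assume ``the weighted measure of $\{w\le M-\omega/2\}$ in some time slab of $Q_r$ is a fixed fraction of $\mu(B_r^+)$,'' and then apply Proposition~\ref{prop:degiorgiisoperimetricelliptic} ``at each time slice.'' These are not the same demand: the isoperimetric inequality requires the complementary set $\{w(\cdot,t)\le M-\omega/2\}$ to occupy a fixed fraction of $|B_r^+|_{\mu_{p+1}}$ at \emph{every} slice $t$ in the slab, not on average over the slab. The only thing the equation gives you for free is a dichotomy at one time slice — in the paper, the slice $t=-(R/2)^{p+1}$ at the bottom of the next cylinder (see the proof of Theorem~\ref{thm:holderatboundary}). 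The missing ingredient is Lemma~\ref{lem:decay-2} (and Lemma~\ref{lem:decay-2'}): starting from the density condition $|\{w(\cdot,t_0)>M_1-\delta\}|_{\mu_{p+1}}\le(1-\sigma)|B_R^+|_{\mu_{p+1}}$ at a \emph{single} time $t_0$, the Caccioppoli inequality with a purely spatial cut-off on a short time interval, combined with Lemma~\ref{lem:decay-1}, propagates the density to every later slice at a lowered level $M_1-\delta 2^{-s_j}$ and a slightly weakened fraction, and then an induction over $N=a^{-1}$ sub-intervals covers $[t_0,t_0+R^{p+1}]$. Only after this does your slice-by-slice isoperimetric plus pigeonhole (Lemma~\ref{lem:decay-1}) apply on the whole cylinder, and only then does Lemma~\ref{lem:smallonlargeset} close the loop.

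\emph{No symmetry under $w\mapsto M-w$.} The equation $G^{p+1}\pa_t w^p=\mbox{div}(AG^2\nabla w)$ is not preserved by $w\mapsto M-w$, since the nonlinearity $w^p$ is not odd about $M/2$. The asymmetry is explicit in Lemma~\ref{lem:degiorgiclass}: in \eqref{eq:caccipoli1} the cubic correction $+Cv^3$ sits on the initial-data side, while in \eqref{eq:caccipoli2} it sits on the supremum side as $-C\tilde v^3$. As a result, every sub-level lemma (Lemmas~\ref{lem:smallonlargeset2},~\ref{lem:decay-1'},~\ref{lem:decay-2'},~\ref{lem:decay-3'}) carries an extra smallness restriction $0<\delta\le\delta_0(n,p,\Lambda,\overline m,\overline M)$ needed to absorb that cubic term, a restriction absent from the super-level versions. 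Theorem~\ref{thm:holderatboundary} accordingly chooses $\ell_0$ large enough that $(\overline M-\overline m)/\ell_0<\delta_0$ before invoking the two branches separately. Your ``replace $w$ by $M-w$'' assumes an invariance the equation does not have.

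The remaining step — combining the boundary H\"older estimate (Theorem~\ref{thm:holderatboundary}) with interior uniformly parabolic estimates via the rescalings $w_R(y,s)=w(Ry,R^{p+1}s)$ and a comparison of the two scalings at intermediate distances — is carried out in the paper essentially as you describe.
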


The equation \eqref{eq:main3} is uniformly parabolic when $x$ stays away from $\{x_n=0\}$. Therefore, we will establish the H\"older estimates at the boundary, and then, use a scaling argument and the interior H\"older estimates to establish the global H\"older estimate in Theorem \ref{thm:holdernearboundary}.

Throughout this section, we assume all the assumptions in Theorem \ref{thm:holdernearboundary}. We first establish the improvement of oscillation at the boundary.  

Let us start with a Caccioppoli type inequality. 

\begin{lem}\label{lem:degiorgiclass}
Let $k\in [\overline m, \overline M]$ and $\eta$ be a smooth function supported in $B_R(x_0)\times(-1,1)$, where $ B_R(x_0)\subset B_1$. Then for every $-1<t_1\le t_2\le 0$, we have
\begin{equation}\label{eq:caccipoli1}
\begin{split}
&\sup_{t_1<t<t_2} \int_{B_R^+(x_0)} v^{2}\eta^2 G^{p+1} \,\ud x  +\int_{B_R^+(x_0)\times(t_1,t_2] } |\nabla (v\eta)|^2 G^2\,\ud x \ud t  \\ 
&\le \int_{B_R^+(x_0)} (v^{2}+Cv^3)\eta^2 G^{p+1} \,\ud x\Big|_{t_1}  +  C \int_{B_R^+(x_0)\times(t_1,t_2] } \Big (|\nabla \eta|^2 G^2+ |\pa_t\eta|\eta G^{p+1}\Big) v^{2}\,\ud x\ud t,
\end{split}
\end{equation} 
and
\begin{equation}\label{eq:caccipoli2}
\begin{split}
&\sup_{t_1<t<t_2} \int_{B_R^+(x_0)} (\tilde v^{2}-C \tilde v^{3})\eta^2 G^{p+1} \,\ud x  +\int_{B_R^+(x_0)\times(t_1,t_2] } |\nabla (\tilde v\eta)|^2 G^2\,\ud x \ud t  \\ 
&\le \int_{B_R^+(x_0)} \tilde v^{2}\eta^2 G^{p+1} \,\ud x\Big|_{t_1}   + C \int_{B_R^+(x_0)\times(t_1,t_2] } \Big (|\nabla \eta|^2 G^2+ |\pa_t\eta|\eta G^{p+1}\Big) \tilde v^{2}\,\ud x\ud t,
\end{split}
\end{equation} 
where $v=(w-k)^+$, $\tilde v=(w-k)^-$, and $C>0$ depends only on $n$, $p$, $\Lambda$, $\overline m$ and $\overline M$.
\end{lem}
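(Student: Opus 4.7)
The proof is the standard Caccioppoli/energy argument, adapted to the nonlinear time derivative $\partial_t w^p=pw^{p-1}\partial_t w$ via an antiderivative. For \eqref{eq:caccipoli1}, I would test \eqref{eq:main3} with $v\eta^2$ and introduce
\[
\Psi(w)\;=\;\int_k^{w} p\,s^{p-1}(s-k)^+\,\ud s\;=\;\int_0^{v} p\,(k+u)^{p-1}u\,\ud u,
\]
so that $G^{p+1}\partial_t w^p\cdot v=G^{p+1}\partial_t[\Psi(w)]$, which integrates cleanly by parts in $t$ (recall that $G$ is $t$-independent) to produce the time-slice terms and an $\eta\partial_t\eta$ contribution. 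On the spatial side, integrating $\mathrm{div}(AG^2\nabla w)\cdot v\eta^2$ by parts yields $-\int AG^2|\nabla v|^2\eta^2-2\int AG^2 v\eta\nabla v\cdot\nabla\eta$: the contribution on $\partial B_R(x_0)$ vanishes since $\eta=0$ there, while the contribution on $B_R(x_0)\cap\{x_n=0\}$ vanishes because $G\le\Lambda x_n=0$. Ellipticity together with $|2AB|\le\tfrac12A^2+2B^2$ absorbs the cross term into the main $\int G^2|\nabla v|^2\eta^2$ piece.

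The key technical point is to compare $\Psi$ with $v^2$ using the \emph{same} leading coefficient on both sides of the identity. Since $p\ge 1$, the monotonicity $(k+u)^{p-1}\ge k^{p-1}$ for $u\ge 0$ gives the clean lower bound $\Psi(w)\ge\tfrac{p}{2}k^{p-1}v^2$, which I apply on the time-slice $t_2$ of the left-hand side. A second-order Taylor expansion of $\Psi$ around $w=k$, valid uniformly because $\overline m\le w,k\le \overline M$, yields the upper bound $\Psi(w)\le\tfrac{p}{2}k^{p-1}v^2+Cv^3$, which I use on the slice $t_1$ and in the $\eta\partial_t\eta$ integral (bounding $v^3\le \overline M v^2$ there to convert this contribution to the stated $v^2$ form). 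Since $\tfrac{p}{2}k^{p-1}$ is pinched between two positive constants depending only on $p,\overline m,\overline M$, dividing through and relabeling preserves the unit leading coefficient of $v^2$ on both sides. Finally, using $|\nabla(v\eta)|^2\le 2|\nabla v|^2\eta^2+2v^2|\nabla\eta|^2$ and taking $\sup_{t\in(t_1,t_2]}$ of the slice term produces \eqref{eq:caccipoli1}.

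For \eqref{eq:caccipoli2}, I test with $-\tilde v\eta^2$ and use
\[
\tilde\Psi(w)\;=\;\int_w^k p\,s^{p-1}(k-s)^+\,\ud s\;=\;\int_0^{\tilde v} p\,(k-u)^{p-1}u\,\ud u.
\]
The roles swap: for $p\ge 1$, $(k-u)^{p-1}\le k^{p-1}$ on $u\in[0,k]$ gives the clean \emph{upper} bound $\tilde\Psi(w)\le\tfrac{p}{2}k^{p-1}\tilde v^2$ (used on the slice $t_1$), while Taylor now supplies the \emph{lower} bound with cubic defect, $\tilde\Psi(w)\ge\tfrac{p}{2}k^{p-1}\tilde v^2-C\tilde v^3$ (used on the slice $t_2$); this defect is exactly what produces the $-C\tilde v^3$ on the left of \eqref{eq:caccipoli2}. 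The main obstacle is precisely this asymmetric treatment of $\Psi,\tilde\Psi$: because $w^{p-1}$ cannot be pulled outside $\partial_t$, the naive identification $G^{p+1}\partial_t w^p\cdot v\sim\tfrac12\partial_t(G^{p+1}v^2)$ fails, and the cubic Taylor remainders of $\Psi$ and $\tilde\Psi$ dictate precisely the correction terms that appear in \eqref{eq:caccipoli1} and \eqref{eq:caccipoli2}.
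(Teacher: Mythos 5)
Your proof is correct and essentially identical to the paper's: your antiderivative $\Psi$ is exactly $p$ times the function $\theta_1(v,k,p)=\tfrac{(v+k)^{p+1}}{p+1}-\tfrac{k(v+k)^p}{p}+\tfrac{k^{p+1}}{p(p+1)}$ used there, your pinching $\tfrac{p}{2}k^{p-1}v^2\le\Psi\le\tfrac{p}{2}k^{p-1}v^2+Cv^3$ is the paper's \eqref{eq:nonlinear-term}, and the asymmetric treatment (upper bound for $\Psi$ at $t_1$, lower bound at $t_2$, reversed for $\tilde\Psi$) is precisely how the paper obtains the $+Cv^3$ on the right of \eqref{eq:caccipoli1} and $-C\tilde v^3$ on the left of \eqref{eq:caccipoli2}.
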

\begin{proof}
By the assumptions of $w$ in Theorem \ref{thm:holdernearboundary}, we have
\begin{equation}\label{eq:integrationbyparts}
\begin{split}
\int_{B_1^+ \times (-1, s]} G^{p+1} \varphi  \partial_t w^p \,\ud x\ud t  = -\int_{B_1^+ \times (-1, s]}G^2 A \nabla w \cdot \nabla\varphi \,\ud x\ud t 
\end{split}
\end{equation}
for every $s\in(-1,0]$ and every test function $\varphi$ that is Lipschitz continuous in $\overline Q_1^+$,  and vanishes on $\pa'' B_1^+\times[-1,0]$. 

First, by using $v\eta^2$ as a test function, we have for the left-hand side in \eqref{eq:integrationbyparts} that
\begin{align*}
\int_{-1}^s \int_{B_1^+} G^{p+1}v\eta^2 \partial_t w^p 
&= p\int_{-1}^s \int_{B_1^+} G^{p+1}v\eta^2 w^{p-1}\partial_t v\\
&= p\int_{-1}^s \int_{B_1^+} G^{p+1}\eta^2[(v+k)^{p}-k(v+k)^{p-1}]\partial_t v\\
&= p\int_{-1}^s \int_{B_1^+} G^{p+1}\eta^2\partial_t \left(\frac{(v+k)^{p+1}}{p+1}-\frac{k(v+k)^p}{p}+\frac{k^{p+1}}{p(p+1)}\right).
\end{align*}
Let 
\[
\theta_1(v,k,p)= \frac{(v+k)^{p+1}}{p+1}-\frac{k(v+k)^p}{p}+\frac{k^{p+1}}{p(p+1)}. 
\]
It is easy to see that 
\be  \label{eq:nonlinear-term}
\frac{1}{2}k^{p-1}v^2\le\theta_1(v,k,p) \le \frac{1}{2}k^{p-1}v^2 + C_0 v^3
\ee
for some $C_0>0$ depending only on $\overline m,\overline M, p$.  Since $v+k\le M\le \overline M$ and $k\ge m\ge \overline m$, we then have

\begin{align*}
&\int_{-1}^s \int_{B_1^+} G^{p+1}v\eta^2 \partial_t w^p \,\ud x\ud t \\
&\ge  \frac{p}{2} k^{p-1}\int_{B_1^+} G^{p+1}\eta^2(x,s) v^2(x,s)\,\ud x-  \frac{p}{2} k^{p-1}\int_{B_1^+} G^{p+1}\eta^2(x,t_1) v^2(x,t_1)\,\ud x  \\
&\quad-  C \int_{B_1^+} G^{p+1}\eta^2(x,t_1) v^3(x,t_1)\,\ud x -C\int_{-1}^s \int_{B_1^+} G^{p+1}\eta|\eta_t| v^{2}\,\ud x\ud t.
\end{align*}
Secondly, we have the right-hand side of \eqref{eq:integrationbyparts} that
\begin{align*}
\int_{Q_1} (A\nabla v)\nabla (v\eta^2)G^2\ud x\ud t\ge \frac{1}{2\Lambda} \int_{Q_1} |\nabla (\eta v)|^2G^2\,\ud x\ud t- C\int_{Q_1} |\nabla \eta |^2v^2G^2\,\ud x\ud t. 
\end{align*}
Then \eqref{eq:caccipoli1} follows.

The proof of \eqref{eq:caccipoli2} is similar by using $-\tilde v\eta^2$ as a test function. We have
\begin{align*}
-\int_{-1}^s \int_{B_1^+} G^{p+1}\tilde v\eta^2 \partial_t w^p 
&= p\int_{-1}^s \int_{B_1^+} G^{p+1}\tilde v\eta^2 w^{p-1}\partial_t\tilde  v\\
&= p\int_{-1}^s \int_{B_1^+} G^{p+1}\eta^2[-(k-\tilde v)^{p}+k(k-\tilde v)^{p-1}]\partial_t \tilde v\\
&= p\int_{-1}^s \int_{B_1^+} G^{p+1}\eta^2\partial_t \left(\frac{(k-\tilde v)^{p+1}}{p+1}-\frac{k(k-\tilde  v)^p}{p}+\frac{k^{p+1}}{p(p+1)}\right).
\end{align*}
Since
\[
\frac{1}{2} k^{p-1}\tilde v^2-C\tilde v^3\le \frac{(k-\tilde v)^{p+1}}{p+1}-\frac{k(k-\tilde v)^p}{p}+\frac{k^{p+1}}{p(p+1)}\le \frac{1}{2} k^{p-1}\tilde v^2,
\]
 we have
 \begin{align*}
&-\int_{-1}^s \int_{B_1^+} G^{p+1}\tilde v\eta^2 \partial_t w^p \,\ud x\ud t \\
&\ge  \frac{p}{2} k^{p-1}\int_{B_1^+} G^{p+1}\eta^2(x,s) \tilde v^2(x,s)\,\ud x -  C \int_{B_1^+} G^{p+1}\eta^2(x,s)\tilde v^3(x,s)\,\ud x  \\
&\quad -  \frac{p}{2} k^{p-1}\int_{B_1^+} G^{p+1}\eta^2(x,t_1) \tilde v^2(x,t_1)\,\ud x  -C\int_{-1}^s \int_{B_1^+} G^{p+1}\eta|\eta_t| \tilde v^{2}\,\ud x\ud t.
\end{align*}
Hence, \eqref{eq:caccipoli2} follows in a similar way.
\end{proof}

For $x_0\in\partial \R^n_+$ and $R>0$, let 
$$Q_R(x_0,t_0):=B_R(x_0)  \times (t_0-R^{p+1}, t_0], \quad Q_R^+(x_0,t_0):=B_R^+(x_0)  \times (t_0-R^{p+1}, t_0].$$ We simply write them as $Q_R$ and $Q_R^+$ if $(x_0,t_0)=(0,0)$.

For $q\ge 1$, let  $$\ud \mu_{q}= G^q \,\ud x,\quad \ud \nu_{q}= G^q \,\ud x\ud t$$  and 
\[
|A|_{\mu_q}= \int_{A} G^q \,\ud x\ \ \mbox{for }A\subset B_1^+, \quad  |\tilde A|_{\nu_q}=\int_{\tilde A} G^q \,\ud x\ud t\ \ \mbox{for }\widetilde A\subset Q_1^+.
\]
It is easy to find that for  $\tilde A\subset Q_R^+$, 
\be \label{eq:connection}
\frac{|\tilde A|_{\nu_{2}}}{|Q_R^+|_{\nu_{2}}}\le  \frac{|\tilde A|_{\nu_{p+1}}^{\frac{2}{p+1}}|\tilde A|_{\nu_{0}}^{\frac{p-1}{p+1}}}{|Q_R^+|_{\nu_{2}}}\le C  \frac{|\tilde A|_{\nu_{p+1}}^{\frac{2}{p+1}}R^{\frac{(p-1)(n+p+1)}{p+1}}}{R^{n+p+3}} \le C \left( \frac{|\tilde A|_{\nu_{p+1}}}{|Q_R^+|_{\nu_{p+1}}} \right)^{\frac{2}{p+1}} ,
\ee
where $C=C(n,\Lambda,p) >0$ depends only on $n,p$ and $\Lambda$, since 
\[
 \frac{R^{\frac{(p-1)(n+p+1)}{p+1}}}{R^{n+p+3}}= \frac{1}{R^{\frac{2(n+2p+2)}{p+1}}}\le C(n,\Lambda,p) |Q_R^+|_{\nu_{p+1}}^{-\frac{2}{p+1}}.
\]

The following lemma shows that if $w$ is small on a large portion of $Q_R^+$ measured by $\nu_{p+1}$, then we can improve its oscillation in $Q_{R/2}^+$.

\begin{lem}\label{lem:smallonlargeset} 
Let $0<R<1$ and
\[
\overline m\le m\le \inf_{Q_R^+} w\le \sup_{Q_R^+} w\le M\le \overline M.
\]
There exists  $0<\gamma_0<1$ depending only on $n$, $p$, $\Lambda$, $\overline m$ and $\overline M$ such that for  every $\delta >0$, if 
\[
\frac{|\{(x,t)\in Q_R^+: w(x,t)>M-\delta\} |_{\nu_{p+1}}}{|Q_R^+|_{\nu_{p+1}}} \le \gamma_0,
\]
then
\[
w\le M-\frac{\delta}{2}\quad\mbox{in }Q_{R/2}^+.
\]
\end{lem}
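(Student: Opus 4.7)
The plan is to run a standard De~Giorgi first lemma on nested cylinders, with the weights built into the Caccioppoli and Sobolev machinery already developed. Set
\[
k_j = M-\frac{\delta}{2}-\frac{\delta}{2^{j+1}},\qquad R_j=\frac{R}{2}+\frac{R}{2^{j+1}},\qquad v_j = (w-k_j)^+,
\]
so $k_j\nearrow M-\delta/2$ and $R_j\searrow R/2$. Choose smooth cutoffs $\eta_j$ with $\eta_j\equiv 1$ on $Q_{R_{j+1}}^+$, supported in $B_{R_j}(0)\times(-R_j^{p+1},1)$, with $\eta_j\equiv 0$ for $t\le -R_j^{p+1}$ and on $\partial'' B_{R_j}^+$, satisfying $|\nabla\eta_j|\le C\,2^j/R$ and $|\partial_t\eta_j|\le C\,2^{(p+1)j}/R^{p+1}$. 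Because $\eta_j$ vanishes at the past time slice, the $Cv^3$ boundary term in Lemma~\ref{lem:degiorgiclass} drops; because $k_j\ge M-\delta$ one has $v_j\le\delta$ pointwise; and because $G\le\Lambda x_n\le\Lambda R$ the two bulk terms satisfy $|\nabla\eta_j|^2 G^2+|\partial_t\eta_j|\eta_j G^{p+1}\le C\,2^{(p+1)j}R^{-2}\,G^2$.

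Plugging these into Lemma~\ref{lem:degiorgiclass} yields
\[
\|v_j\eta_j\|_{V_0^1(Q_{R_j}^+)}^{2}\;\le\; C\,\delta^{2}\,2^{(p+1)j}\,R^{-2}\,A_j,\qquad A_j:=|\{w>k_j\}\cap Q_{R_j}^+|_{\nu_2}.
\]
Feeding this into the weighted parabolic Sobolev inequality (Lemma~\ref{lem:weightedsobolev}) gives
\[
\Big(\int_{Q_{R_j}^+} (v_j\eta_j)^{2\chi} G^2\,\ud x\ud t\Big)^{1/\chi}\;\le\; C\,\delta^{2}\,2^{(p+1)j}\,R^{-2}\,A_j.
\]
Since $v_j>k_{j+1}-k_j=\delta\,2^{-j-2}$ on $\{w>k_{j+1}\}$ and $\eta_j\equiv 1$ on $Q_{R_{j+1}}^+$, Chebyshev gives $(\delta\,2^{-j-2})^{2\chi}A_{j+1}\le \big(C\delta^{2}2^{(p+1)j}R^{-2}A_j\big)^{\chi}$. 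The factors of $\delta^{2\chi}$ cancel exactly, leaving the recursion
\[
A_{j+1}\le C\,b^{j}\,R^{-2\chi}\,A_j^{\chi}
\]
for some $b=b(n,p)>1$. Normalizing $Y_j:=A_j/|Q_R^+|_{\nu_2}$ and using $|Q_R^+|_{\nu_2}\sim R^{n+p+3}$ together with the identity $(n+p+3)(\chi-1)=2\chi$ (since $\chi-1=2/(n+p+1)$), the recursion becomes the scale-invariant
\[
Y_{j+1}\le C\,b^{j}\,Y_j^{\chi}.
\]
A classical iteration lemma then produces $\epsilon_0=\epsilon_0(n,p,\Lambda,\overline m,\overline M)>0$ such that $Y_0\le\epsilon_0$ implies $Y_j\to 0$.

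Finally I transport the hypothesis (stated with the $\nu_{p+1}$ measure) to the $\nu_2$ measure via the comparison \eqref{eq:connection}:
\[
Y_0\;\le\;C\left(\frac{|\{w>M-\delta\}\cap Q_R^+|_{\nu_{p+1}}}{|Q_R^+|_{\nu_{p+1}}}\right)^{2/(p+1)}\le\;C\gamma_0^{2/(p+1)}.
\]
Choosing $\gamma_0$ so that $C\gamma_0^{2/(p+1)}\le\epsilon_0$ forces $A_j\to 0$; by Lipschitz continuity of $w$ and positivity of $G$ on $B_{R/2}^+$, this upgrades to the pointwise bound $w\le M-\delta/2$ on $Q_{R/2}^+$.

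The main delicate point is bookkeeping the two weights in parallel: Caccioppoli naturally controls sup-in-time in the $\nu_{p+1}$-norm and gradients in the $\nu_{2}$-norm, whereas Sobolev upgrades only in $\nu_2$, and the hypothesis is phrased in $\nu_{p+1}$. The bridge is the elementary interpolation \eqref{eq:connection}, and the happy cancellation of the $\delta^{2\chi}$ powers is what makes the iteration close cleanly in scale-invariant form. A minor but necessary technicality is choosing $\eta_j$ to vanish at the initial time so as to kill the cubic boundary term produced by the nonlinearity in Lemma~\ref{lem:degiorgiclass}.
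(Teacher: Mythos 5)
Your proof is correct and follows essentially the same route as the paper: Caccioppoli (Lemma \ref{lem:degiorgiclass}) with cutoffs vanishing at the past time slice, the weighted parabolic Sobolev inequality (Lemma \ref{lem:weightedsobolev}), the algebraic identity $(n+p+3)(\chi-1)=2\chi$ to make the recursion scale-invariant, and the bridge \eqref{eq:connection} from $\nu_{p+1}$ to $\nu_2$, with the same nested levels $k_j$ and radii $r_j$ up to relabeling. The only differences are cosmetic bookkeeping choices (slightly different powers of $2$ in the cutoff bounds, and you normalize by $|Q_R^+|_{\nu_2}$ explicitly), which do not change the argument.
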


\begin{proof}
Let
\[
r_j=\frac {R}2+\frac{R}{2^{j+1}},\quad k_j=M-\delta+ \frac{\delta}{2}(1-2^{-j}),\quad j=0,1,2,\cdots.
\]
Let $\eta_j $ be a smooth cut-off function satisfying 
\[
\mbox{supp}(\eta_j) \subset Q_{r_j}, \quad 0\le \eta_j \le 1, \quad \eta_j=1 \mbox{ in }Q_{r_{j+1}}^+, 
\]
\[
|\nabla \eta_j(x,t)|^2+|\pa_t \eta_j(x,t)| R^{p-1}  \le \frac{C(n)}{(r_j-r_{j+1})^2} \quad \mbox{in }Q_R^+. 
\]
By Lemma \ref{lem:degiorgiclass},  Lemma \ref{lem:weightedsobolev} and \eqref{eq:G}, we have
\[
 \Big(\int_{Q_R^+}  |\eta_j v|^{2\chi}x_n^2\,\ud x \ud t \Big)^{\frac{1}{\chi}} \le C \int_{Q_{R}^+} \Big (|\nabla \eta_j|^2 + |\pa_t\eta_j | x_n^{p-1}\Big) x_n^2 v^{2} \,\ud x\ud t,
\]
where $v=(w-k_j)^+$. Let $A(k,\rho)= \{(x,t)\in Q_{\rho}^+: w\ge k\}$  for $0<\rho\le R$. Then 
\[
 \Big(\int_{Q_R^+}  |\eta_j v|^{2\chi}x_n^2\,\ud x \ud t \Big)^{\frac{1}{\chi}} \ge (k_{j+1}- k_j)^2 |A(k_{j+1}, r_{j+1})|_{\nu_2}^{\frac{1}{\chi}},  
\]
\[
\int_{Q_{R}^+} \Big (|\nabla \eta_j|^2 + |\pa_t\eta_j | x_n^{p-1}\Big) x_n^2 v^{2} \,\ud x\ud t \le \frac{C}{(r_j-r_{j+1})^2} (M-k_j)^2 |A(k_{j}, r_{j})|_{\nu_2} . 
\]
It follows that 
\begin{align*}
\frac{|A(k_{j+1}, r_{j+1})|_{\nu_2}}{|Q_R^+|_{\nu_2}} & \le  C 2^{4(j+2)\chi } R^{(n+p+3)(\chi-1)-2\chi} \left( \frac{|A(k_{j}, r_{j})|_{\nu_2} }{|Q_R^+|_{\nu_2}} \right)^{\chi}\\&
= C 2^{4(j+2)\chi } \left( \frac{|A(k_{j}, r_{j})|_{\nu_2} }{|Q_R^+|_{\nu_2}} \right)^{\chi} . 
\end{align*}
Therefore, by choosing $\gamma_0$ small, it follows from  \eqref{eq:connection} that we can make $\frac{|A(k_{0}, r_{0})|_{\nu_2} }{|Q_R^+|_{\nu_2}}$ small enough, so that
\[
\lim_{j\to \infty}\frac{|A(k_{j+1}, r_{j+1})|_{\nu_2}}{|Q_R^+|_{\nu_2}}=0.
\]
Then the conclusion follows.
\end{proof}

Similarly, if $w$ is large on a large portion of $Q_R^+$ measured by $\nu_{p+1}$, then we can improve its oscillation in $Q_{R/2}^+$ as well.

\begin{lem}\label{lem:smallonlargeset2}
Let $0<R<1$ and
\[
\overline m\le m\le \inf_{Q_R^+} w\le \sup_{Q_R^+} w\le M\le \overline M.
\]
There exist  $0<\gamma_0<1$ and $0<\delta_0<1$, both of which depend only on $n$, $p$, $\Lambda$, $\overline m$ and $\overline M$, such that for  every $0<\delta \le \delta_0$, if 
\[
\frac{|\{(x,t)\in Q_R^+: w(x,t)<m+\delta\} |_{\nu_{p+1}}}{|Q_R^+|_{\nu_{p+1}}} \le \gamma_0,
\]
then
\[
w\ge m+\frac{\delta}{2}\quad\mbox{in }Q_{R/2}.
\]
\end{lem}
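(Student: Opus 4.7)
The plan is to mirror the proof of Lemma \ref{lem:smallonlargeset} using the Caccioppoli inequality \eqref{eq:caccipoli2} for $\tilde v=(w-k)^-$ in place of \eqref{eq:caccipoli1}. I would set up the same sequences of radii and cutoffs as before, but with the truncation levels approaching $m+\delta/2$ from above: let
$$r_j=\frac{R}{2}+\frac{R}{2^{j+1}}, \qquad k_j=m+\delta-\frac{\delta}{2}(1-2^{-j}),$$
so that $k_0=m+\delta$ and $k_j\searrow m+\delta/2$, and let $\eta_j$ be cutoffs supported in $Q_{r_j}$ with $\eta_j\equiv 1$ on $Q_{r_{j+1}}$ and the same bounds on $|\nabla\eta_j|^2$ and $|\partial_t\eta_j|R^{p-1}$. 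Set $\tilde v_j=(w-k_j)^-$ and $B(k,\rho)=\{(x,t)\in Q_\rho^+: w\le k\}$.

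The main new point, and the reason a smallness restriction $\delta\le\delta_0$ must be imposed, is to absorb the term $-C\tilde v^3$ appearing on the left-hand side of \eqref{eq:caccipoli2}. Since $\tilde v_j=(w-k_j)^-\le k_j-m\le\delta$, we have $\tilde v_j^3\le\delta\, \tilde v_j^2$. Choosing $\delta_0$ so small (depending only on the constant $C$ in \eqref{eq:caccipoli2}, hence only on $n,p,\Lambda,\overline m,\overline M$) that $C\delta_0\le 1/2$, we get
$$\tilde v_j^2-C\tilde v_j^3\ge \tfrac{1}{2}\tilde v_j^2 \quad\text{whenever }\delta\le\delta_0.$$
Thus the left-hand side of \eqref{eq:caccipoli2} still dominates $\tfrac{1}{2}\sup_t\!\int \tilde v_j^2\eta_j^2 G^{p+1}\,\mathrm{d}x$, exactly as $v^2$ was controlled in Lemma \ref{lem:smallonlargeset}.

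With this in hand, the rest of the argument proceeds verbatim as before: combining \eqref{eq:caccipoli2}, the parabolic Sobolev inequality of Lemma \ref{lem:weightedsobolev}, the bound $|\nabla\eta_j|^2+|\partial_t\eta_j|x_n^{p-1}\le C 4^j/R^2$, and the elementary inequalities
$$\Big(\int_{Q_R^+}|\eta_j\tilde v_j|^{2\chi}x_n^2\Big)^{1/\chi}\ge (k_j-k_{j+1})^2|B(k_{j+1},r_{j+1})|_{\nu_2}^{1/\chi},\quad \tilde v_j\le k_0-k_{-1}=\delta\cdot 2^{-0},$$
together with $(k_j-k_{j+1})=\delta/2^{j+2}$, I would obtain the De Giorgi recursion
$$\frac{|B(k_{j+1},r_{j+1})|_{\nu_2}}{|Q_R^+|_{\nu_2}}\le C\, 2^{4(j+2)\chi}\left(\frac{|B(k_j,r_j)|_{\nu_2}}{|Q_R^+|_{\nu_2}}\right)^{\chi}.$$

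Finally I would translate the hypothesis via \eqref{eq:connection}: the smallness of $|B(k_0,r_0)|_{\nu_{p+1}}/|Q_R^+|_{\nu_{p+1}}\le\gamma_0$ implies smallness of $|B(k_0,r_0)|_{\nu_2}/|Q_R^+|_{\nu_2}$, and choosing $\gamma_0$ small enough (depending only on the allowed parameters) triggers the standard fast-geometric-decay lemma for such recursions, yielding $|B(k_j,r_j)|_{\nu_2}/|Q_R^+|_{\nu_2}\to 0$. Hence $w\ge m+\delta/2$ almost everywhere in $Q_{R/2}^+$, and by the assumed Lipschitz continuity of $w$, everywhere in $Q_{R/2}^+$. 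I expect the only subtlety is the $\tilde v^3$ absorption just described; everything else is a near-verbatim repetition of Lemma \ref{lem:smallonlargeset}, so the constants $\gamma_0$ and $\delta_0$ can be chosen to depend only on $n,p,\Lambda,\overline m,\overline M$.
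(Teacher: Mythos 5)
Your proposal matches the paper's proof: introduce $\delta_0$ so that $C\delta_0\le 1/2$ absorbs the cubic term in \eqref{eq:caccipoli2}, giving $\tilde v^2-C\tilde v^3\ge\tfrac12\tilde v^2$, after which the De Giorgi iteration is verbatim that of Lemma \ref{lem:smallonlargeset} with $(w-k_j)^-$ replacing $(w-k_j)^+$. The only blemish is the notational typo ``$k_0-k_{-1}$''; you clearly mean $\tilde v_j\le k_j-m\le\delta$, which does not affect the argument.
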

\begin{proof}
Let
\[
r_j=\frac {R}2+\frac{R}{2^{j+1}},\quad \widetilde k_j=m+\delta- \frac{\delta}{2}(1-2^{-j}),\quad j=0,1,2,\cdots.
\]
There exists $\delta_0>0$ depending only on $n$, $p$, $\Lambda$, $\overline m$ and $\overline M$  such that for $0<\delta<\delta_0$ and $\tilde v= (w-\widetilde k_j)^-$, there holds
\[
\tilde v^2-C\tilde v^3\ge \frac{1}{2} \tilde v^2,
\]
where the constant $C$ in the above is the one in \eqref{eq:caccipoli2}.

The left proof is then identical to that of Lemma \ref{lem:smallonlargeset}.
\end{proof}

Next, we estimate the decay of the distribution function of $w$.

\begin{lem}\label{lem:decay-1} Let $0<R<\frac12$, $0<a\le 1$, $-\frac12<t_0\le -aR^{p+1}$, $0<\sigma<1$, $\delta>0$, and $\overline M\ge M_a\ge \sup_{B_{2R}^+ \times [t_0, t_0+aR^{p+1}] } w$.  If 
\[
|\{x\in B_R^+: w(x,t)>M_a-\delta\}|_{\mu_{p+1}}\le (1-\sigma) |B_R^+|_{\mu_{p+1}} \quad \mbox{for any } t_0\le t\le  t_0+aR^{p+1} ,
\]
then 
\[
\frac{|\{(x,t)\in B_{R}^+ \times [t_0, t_0+aR^{p+1}] : w(x,t)>M_a-\frac{\delta}{2^\ell}\}|_{\nu_{p+1}}}{|B_{R}^+ \times [t_0, t_0+aR^{p+1}]|_{\nu_{p+1}}}\le \frac{C}{\sigma \sqrt{a \ell}}\quad\forall\,\ell\in\mathbb{Z}^+,
\]
where $C$ depends only on $n$, $p$, $\Lambda$, $\overline m$ and $\overline M$.
\end{lem}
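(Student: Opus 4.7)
My strategy is the classical De Giorgi measure-decay scheme: apply the weighted De Giorgi isoperimetric inequality (Proposition \ref{prop:degiorgiisoperimetricelliptic}) at each time slice to the level sets of $w(\cdot, t)$ in $B_R^+$, integrate in $t$, and close the argument by bounding the resulting weighted gradient double integral via the Caccioppoli estimate \eqref{eq:caccipoli1}.

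For each integer $j \ge 0$, set $k_j := M_a - \delta/2^j$ and $v_j := (w - k_j)^+$. By passing to complements, the hypothesis is equivalent to
\[
|\{w(\cdot, t) \le k_0\} \cap B_R^+|_{\mu_{p+1}} \ge \sigma\, |B_R^+|_{\mu_{p+1}}
\]
for every $t\in[t_0, t_0+aR^{p+1}]$, and since $k_j \ge k_0$ the same lower bound holds with $k_0$ replaced by any $k_j$. Writing $A_j(t) := |\{w(\cdot, t) \ge k_j\} \cap B_R^+|_{\mu_{p+1}}$ and $D_j(t) := \{k_j < w(\cdot, t) < k_{j+1}\} \cap B_R^+$, I would apply Proposition \ref{prop:degiorgiisoperimetricelliptic} slicewise with the two levels $k_j < k_{j+1}$ (so $k_{j+1} - k_j = \delta/2^{j+1}$), and use the elementary inequality $x_n^{2p} \le R^{p-1} x_n^{p+1}$ on $B_R^+$ (which holds since $p \ge 1$) to rewrite the isoperimetric right-hand side as $|D_j(t)|_{\mu_{p+1}}^{1/2}$ times a weighted gradient term.

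Integrating in $t$ over $(t_0, t_0 + aR^{p+1}]$ and applying Cauchy-Schwarz in time produces $|A_{j+1}|_{\nu_{p+1}}$ on the left and $|D_j|_{\nu_{p+1}}^{1/2} \bigl( \iint_Q |\nabla v_j|^2 G^2 \bigr)^{1/2}$ on the right, with $Q := B_R^+ \times (t_0, t_0+aR^{p+1}]$. To bound the last factor I would apply \eqref{eq:caccipoli1} with a purely spatial cutoff $\eta$ supported in $B_{2R}$ and equal to one on $B_R$. Since no information is available for times below $t_0$, the initial-data term at $t_0$ in \eqref{eq:caccipoli1} must be retained; combined with the pointwise bound $v_j \le \delta/2^j$ (from $w \le M_a$ on $B_{2R}^+ \times [t_0, t_0+aR^{p+1}]$) and $a \le 1$, it yields $\iint_Q |\nabla v_j|^2 G^2 \le C (\delta/2^j)^2 R^{n+p+1}$. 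Tracking the powers of $R$ (the $R^{n+p+2}$ from Proposition \ref{prop:degiorgiisoperimetricelliptic}, the $R^{(p-1)/2}$ from the $x_n^{2p}$ reduction, the $R^{(n+p+1)/2}$ from the Caccioppoli bound, and dividing by $|B_R^+|_{\mu_{p+1}} \sim R^{n+p+1}$), the remaining factor equals $\sqrt{|Q|_{\nu_{p+1}}/a}$ up to a constant. Using $D_j = A_j \setminus A_{j+1}$ and squaring, I arrive at the recurrence
\[
\sigma^2\, |A_{j+1}|_{\nu_{p+1}}^2 \le \frac{C}{a}\, |Q|_{\nu_{p+1}}\, \bigl(|A_j|_{\nu_{p+1}} - |A_{j+1}|_{\nu_{p+1}}\bigr).
\]

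Summing in $j$ from $0$ to $\ell - 1$, the right-hand side telescopes to at most $Ca^{-1} |Q|_{\nu_{p+1}}^2$, while by the monotonicity of $j \mapsto |A_j|_{\nu_{p+1}}$ one has $\sum_{j=0}^{\ell - 1} |A_{j+1}|_{\nu_{p+1}}^2 \ge \ell\,|A_\ell|_{\nu_{p+1}}^2$. Dividing then yields $|A_\ell|_{\nu_{p+1}}/|Q|_{\nu_{p+1}} \le C/(\sigma \sqrt{a\ell})$, which is the stated conclusion. The step I expect to require the most care is the treatment of the initial term in the Caccioppoli inequality: since no estimate is available for $w$ before time $t_0$, this term cannot be improved beyond $(\delta/2^j)^2 R^{n+p+1}$, and the argument hinges on this crude bound being exactly sharp enough, once weighted against the powers of $R$ coming from Proposition \ref{prop:degiorgiisoperimetricelliptic} and the conversion from $x_n^{2p}$ to $x_n^{p+1}$, to make the telescoping recurrence close.
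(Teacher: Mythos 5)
Your proposal follows the same De Giorgi measure-decay scheme as the paper's proof, with identical key ingredients: the slicewise application of Proposition \ref{prop:degiorgiisoperimetricelliptic} with levels $k_j<k_{j+1}$, the reduction $x_n^{2p}\le R^{p-1}x_n^{p+1}$, the Caccioppoli bound \eqref{eq:caccipoli1} with a purely spatial cutoff yielding $\iint_Q|\nabla v_j|^2G^2\le C(\delta/2^j)^2R^{n+p+1}$, and the telescoping sum after squaring. Your bookkeeping of the powers of $R$ (via $R^{n+2p+2}\sim a^{-1}|Q|_{\nu_{p+1}}$) and the final division by $\ell$ match the paper exactly, so the argument is correct and essentially the paper's own.
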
 

\begin{proof} 
Let 
\[
A(k,R;t)= B_R^+ \cap \{w(\cdot, t)>k\}, \quad A(k,R)= B_{R}^+ \times [t_0, t_0+aR^{p+1}]  \cap \{w>k\}
\]
and 
\[
k_j= M_a- \frac{\delta}{2^j}. 
\]
By Proposition \ref{prop:degiorgiisoperimetricelliptic}, we have 
\begin{align*}
&(k_{j+1}-k_j) |A(k_{j+1},R;t)|_{\mu_{p+1}} |B_R^+\setminus A(k_{j},R;t)|_{\mu_{p+1}}\\
& \le C R^{n+p+2} \left(\int_{A(k_{j},R;t) \setminus A(k_{j+1},R;t)} |\nabla w|^2 x_n^2\right)^{1/2} |A(k_{j},R;t) \setminus A(k_{j+1},R;t)|_{\mu_{2p}}^{1/2}\\
& \le C R^{n+p+2} \left(\int_{B_R^+} |\nabla  (w-k_j)^+|^2 x_n^2\right)^{1/2} |A(k_{j},R;t) \setminus A(k_{j+1},R;t)|_{\mu_{2p}}^{1/2}. 
\end{align*}
By the assumption, 
\[
|B_R^+\setminus A(k_{j},R;t)|_{\mu_{p+1}} \ge \sigma |B_R^+|_{\mu_{p+1}}= C(n,p) \sigma R^{n+p+1}. 
\]
We also have 
\[
|A(k_{j},R;t) \setminus A(k_{j+1},R;t)|_{\mu_{2p}}^{1/2} \le R^{\frac{p-1}{2}} |A(k_{j},R;t) \setminus A(k_{j+1},R;t)|_{\mu_{p+1}}^{1/2} . 
\]
Integrating in the time variable, and using H\"older's inequality, we have
\begin{align*}
&\int_{t_0}^{t_0+a R^{p+1}} |A(k_{j+1},R;t)|_{\mu_{p+1}} \,\ud t\\&
\le \frac{C2^{j+1}}{\delta \sigma} R^{\frac{p+1}{2}} |A(k_{j},R) \setminus A(k_{j+1},R)|_{\nu_{p+1}}^{1/2}\left(\int_{B_R^+\times [t_0, t_0+aR^{p+1}]  } |\nabla (w-k_j)^+|^2 x_n^2\,\ud x \ud t\right)^{1/2} .
\end{align*}
Let $\eta(x) $ be a smooth cut-off function satisfying 
\begin{equation}\label{eq:testfunction}
\begin{split}
&\mbox{supp}(\eta) \subset B_{2R}, \quad 0\le \eta \le 1, \quad \eta=1 \mbox{ in }B_{R}, \quad |\nabla \eta(x)|^2  \le \frac{C(n)}{R^2} \quad \mbox{in }B_{2R}. 
\end{split}
\end{equation}
It follows from \eqref{eq:caccipoli1}  that
\begin{align*}
&\int_{t_0}^{t_0+a R^{p+1}}\int_{B_R^+  } |\nabla (w-k_j)^+|^2 x_n^2\,\ud x \ud t \\&
\le C \left(\int_{B_{2R}^+} |(w-k_j)^+(t_0)|^2 x_n^{p+1} \,\ud x+\frac{1}{R^2}\int_{t_0}^{t_0+a R^{p+1}}\int_{B_{2R}^+  } x_n^2 |(w-k_j)^+|^{2} \,\ud x\ud t \right )\\&
\le \frac{C \delta^2}{ 4^j} R^{n+p+1}. 
\end{align*}
Hence, 
\[
 |A(k_{j+1},R)|_{\nu_{p+1}} \le \frac{C}{\sigma} R^{\frac{n+2p+2}{2}} |A(k_{j},R) \setminus A(k_{j+1},R)|_{\nu_{p+1}}^{1/2}
\]
or 
\[
 |A(k_{j+1},R)|_{\nu_{p+1}}^2 \le \frac{C}{\sigma^2} R^{n+2p+2} |A(k_{j},R) \setminus A(k_{j+1},R)|_{\nu_{p+1}}. 
\]
Taking a summation, we have 
\begin{align*}
\ell |A(k_{\ell},R)|_{\nu_{p+1}}^2 &\le \sum_{j=0}^{\ell-1}  |A(k_{j+1},R)|_{\nu_{p+1}}^2 \le  \frac{C}{\sigma^2} R^{n+2p+2} |B_{R}^+ \times [t_0, t_0+aR^{p+1}]|_{\nu_{p+1}} \\& \le  \frac{C}{a\sigma^2}  |B_{R}^+ \times [t_0, t_0+aR^{p+1}]|_{\nu_{p+1}}^2. 
\end{align*}
The lemma follows. 
\end{proof}

Similarly,

\begin{lem}\label{lem:decay-1'} Let $0<R<\frac12$,  $0<a\le 1$, $-\frac12<t_0\le -aR^{p+1}$,  $0<\sigma<1$, $\delta>0$ and $\overline m\le m_a\le \inf_{B_{2R}^+ \times [t_0, t_0+aR^{p+1}] } w$. There exists $\delta_0>0$ depending only on $n$, $p$, $\Lambda$, $\overline m$ and $\overline M$ such that for  every $0<\delta \le \delta_0$, if
\[
|\{x\in B_R^+: w(x,t)<m_a+\delta\}|_{\mu_{p+1}}\le (1-\sigma) |B_R^+|_{\mu_{p+1}} \quad \mbox{for any } t_0\le t\le  t_0+aR^{p+1} ,
\]
then 
\[
\frac{|\{(x,t)\in B_{R}^+ \times [t_0, t_0+aR^{p+1}] : w(x,t)<m_a+\frac{\delta}{2^\ell}\}|_{\nu_{p+1}}}{|B_{R}^+ \times [t_0, t_0+aR^{p+1}]|_{\nu_{p+1}}}\le \frac{C}{\sigma \sqrt{a \ell}} \quad\forall\,\ell\in\mathbb{Z}^+,
\]
where $C$ depends only on $n$, $p$, $\Lambda$, $\overline m$ and $\overline M$.
\end{lem}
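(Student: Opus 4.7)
The plan is to mirror the proof of Lemma \ref{lem:decay-1} line by line, with superlevel sets replaced by sublevel sets, Caccioppoli \eqref{eq:caccipoli1} replaced by \eqref{eq:caccipoli2}, and the threshold $\delta_0$ chosen small enough to absorb the cubic correction that distinguishes the two Caccioppoli bounds.

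Concretely, I would set $\tilde k_j := m_a + \delta\,2^{-j}$ and define
$$\tilde A(k, R; t) := B_R^+ \cap \{w(\cdot, t) < k\},\qquad \tilde A(k, R) := \bigl(B_R^+ \times [t_0, t_0 + aR^{p+1}]\bigr) \cap \{w < k\}.$$
Proposition \ref{prop:degiorgiisoperimetricelliptic} is invariant under $w \mapsto -w$: applying it with that sign flip and with the pair of levels $-\tilde k_j < -\tilde k_{j+1}$ yields, for each $t \in [t_0, t_0 + aR^{p+1}]$,
$$(\tilde k_j - \tilde k_{j+1})\, |\tilde A(\tilde k_{j+1}, R; t)|_{\mu_{p+1}} \, |B_R^+ \setminus \tilde A(\tilde k_j, R; t)|_{\mu_{p+1}} \le C R^{n+p+2}\, E_j(t)^{1/2}\, F_j(t)^{1/2},$$
where $E_j(t) := \int_{\tilde A(\tilde k_j, R; t)\setminus \tilde A(\tilde k_{j+1}, R; t)} |\nabla w|^2 x_n^2\,\ud x$ and $F_j(t) := |\tilde A(\tilde k_j, R; t)\setminus\tilde A(\tilde k_{j+1}, R; t)|_{\mu_{2p}}$. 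The hypothesis, combined with the monotonicity $\tilde k_j \le \tilde k_0 = m_a + \delta$, gives $|B_R^+\setminus\tilde A(\tilde k_j, R; t)|_{\mu_{p+1}} \ge \sigma|B_R^+|_{\mu_{p+1}} \ge C(n,p)\sigma R^{n+p+1}$. Integration in $t$ together with H\"older's inequality then yields, exactly as in the proof of Lemma \ref{lem:decay-1},
$$\int_{t_0}^{t_0 + aR^{p+1}} \!\!|\tilde A(\tilde k_{j+1}, R; t)|_{\mu_{p+1}}\, \ud t \le \frac{C\, 2^{j+1}}{\delta\sigma}\, R^{\frac{p+1}{2}}\, |\tilde A(\tilde k_j, R)\setminus\tilde A(\tilde k_{j+1}, R)|_{\nu_{p+1}}^{1/2} \Bigl(\int_{B_R^+ \times [t_0, t_0+aR^{p+1}]} \!\!|\nabla (w - \tilde k_j)^-|^2 x_n^2\,\ud x\ud t\Bigr)^{1/2}.$$

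The one genuinely new step is the energy estimate on this last gradient integral. I would apply \eqref{eq:caccipoli2} with $k = \tilde k_j$, $\tilde v = (w - \tilde k_j)^-$, and the cut-off $\eta$ from \eqref{eq:testfunction}; the left side of \eqref{eq:caccipoli2} contains the sup-term $\sup_t \int (\tilde v^2 - C \tilde v^3)\eta^2 G^{p+1}$. Since $0 \le \tilde v \le \tilde k_j - m_a \le \delta$, choosing $\delta_0$ so small that $1 - C\delta_0 \ge 0$ (with $C$ the constant from \eqref{eq:caccipoli2}) ensures $\tilde v^2 - C\tilde v^3 \ge 0$ pointwise, and this sup-term can simply be dropped. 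The remaining right-hand side is bounded by $C\delta^2\, 4^{-j}\, R^{n+p+1}$, exactly as in Lemma \ref{lem:decay-1}. Plugging this back and rearranging produces
$$|\tilde A(\tilde k_{j+1}, R)|_{\nu_{p+1}}^2 \le \frac{C}{\sigma^2}\, R^{n+2p+2}\,|\tilde A(\tilde k_j, R)\setminus\tilde A(\tilde k_{j+1}, R)|_{\nu_{p+1}},$$
and summing over $j = 0, \ldots, \ell - 1$, using that the sets $\tilde A(\tilde k_j, R)\setminus\tilde A(\tilde k_{j+1}, R)$ are pairwise disjoint subsets of $B_R^+ \times [t_0, t_0 + aR^{p+1}]$, yields the claimed bound $C/(\sigma\sqrt{a\ell})$. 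The only obstacle, and a mild one at that, is identifying the smallness threshold $\delta_0$ that makes the cubic term in \eqref{eq:caccipoli2} harmless; every other step is a direct transcription from Lemma \ref{lem:decay-1}.
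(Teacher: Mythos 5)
Your proposal is correct and follows the paper's own proof essentially line by line: same sign-flipped use of Proposition~\ref{prop:degiorgiisoperimetricelliptic}, same integration in time and H\"older step, same summation over a telescoping family of disjoint annular sets. The only cosmetic difference is in how the cubic correction in \eqref{eq:caccipoli2} is absorbed — you drop the sup-term after ensuring $\tilde v^2 - C\tilde v^3 \ge 0$, while the paper makes the equivalent choice of $\delta_0$ so that $\tilde v^2 - C\tilde v^3 \ge \tfrac12 \tilde v^2$ — but this is negligible and both are sound.
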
  
\begin{proof} 
Let 
\[
\widetilde A(k,R;t)= B_R^+ \cap \{w(\cdot, t)<k\}, \quad \widetilde A(k,R)= B_{R}^+ \times [t_0, t_0+aR^{p+1}]  \cap \{w<k\}
\]
and 
\[
\widetilde k_j= m_a+ \frac{\delta}{2^j}. 
\]
By Proposition \ref{prop:degiorgiisoperimetricelliptic}, we have 
\begin{align*}
&(\widetilde k_{j}-\widetilde k_{j+1}) |\widetilde A(\widetilde k_{j+1},R;t)|_{\mu_{p+1}} |B_R^+\setminus \widetilde A(\widetilde k_{j},R;t)|_{\mu_{p+1}}\\
& \le C R^{n+p+2} \left(\int_{\widetilde A(\widetilde k_{j},R;t) \setminus \widetilde A(\widetilde k_{j+1},R;t)} |\nabla w|^2 x_n^2\right)^{1/2} |\widetilde A(\widetilde k_{j},R;t) \setminus \widetilde A(\widetilde k_{j+1},R;t)|_{\mu_{2p}}^{1/2}\\
& \le C R^{n+p+2} \left(\int_{B_R^+} |\nabla  (w-\widetilde k_j)^-|^2 x_n^2\right)^{1/2} |\widetilde A(\widetilde k_{j},R;t) \setminus \widetilde A(\widetilde k_{j+1},R;t)|_{\mu_{2p}}^{1/2}. 
\end{align*}
By the assumption, 
\[
|B_R^+\setminus \widetilde A(\widetilde k_{j},R;t)|_{\mu_{p+1}} \ge \sigma |B_R^+|_{\mu_{p+1}}= C(n,p) \sigma R^{n+p+1}. 
\]
We also have 
\[
|\widetilde A(\widetilde k_{j},R;t) \setminus \widetilde A(\widetilde k_{j+1},R;t)|_{\mu_{2p}}^{1/2} \le R^{\frac{p-1}{2}} |\widetilde A(\widetilde k_{j},R;t) \setminus \widetilde A(\widetilde k_{j+1},R;t)|_{\mu_{p+1}}^{1/2} . 
\]
Integrating in the time variable, and using H\"older's inequality, we have
\begin{align*}
&\int_{t_0}^{t_0+a R^{p+1}} |\widetilde A(\widetilde k_{j+1},R;t)|_{\mu_{p+1}} \,\ud t\\&
\le \frac{C2^{j+1}}{\delta \sigma} R^{\frac{p+1}{2}} |\widetilde A(\widetilde k_{j},R) \setminus \widetilde A(\widetilde k_{j+1},R)|_{\nu_{p+1}}^{1/2}\left(\int_{B_R^+\times [t_0, t_0+aR^{p+1}]  } |\nabla (w-\widetilde k_j)^-|^2 x_n^2\,\ud x \ud t\right)^{1/2} .
\end{align*}

There exists $\delta_0>0$ depending only on $n$, $p$, $\Lambda$, $\overline M$ and $\overline m$ such that for $0<\delta<\delta_0$ and $\tilde v= (w-\widetilde k_j)^-$, there holds
\[
\tilde v^2-C\tilde v^3\ge \frac{1}{2} \tilde v^2,
\]
where the constant $C$ in the above is the one in \eqref{eq:caccipoli2}. By choosing $\eta$ to be the one in \eqref{eq:testfunction}, it follows from \eqref{eq:caccipoli2}  that
\begin{align*}
&\int_{t_0}^{t_0+a R^{p+1}}\int_{B_R^+  } |\nabla (w-\widetilde k_j)^-|^2 x_n^2\,\ud x \ud t \\&
\le C\left( \int_{B_{2R}^+} |(w-\widetilde k_j)^-(t_0)|^2 x_n^{p+1} \,\ud x+\frac{1}{R^2}\int_{t_0}^{t_0+a R^{p+1}}\int_{B_{2R}^+  } x_n^2 |(w-\widetilde k_j)^-|^{2} \,\ud x\ud t\right) \\
&\le \frac{C \delta^2}{ 4^j} R^{n+p+1}. 
\end{align*}
Hence, 
\[
 |\widetilde A(\widetilde k_{j+1},R)|_{\nu_{p+1}} \le \frac{C}{\sigma} R^{\frac{n+2p+2}{2}} |\widetilde A(\widetilde k_{j},R) \setminus \widetilde A(\widetilde k_{j+1},R)|_{\nu_{p+1}}^{1/2}
\]
or 
\[
 |\widetilde A(\widetilde k_{j+1},R)|_{\nu_{p+1}}^2 \le \frac{C}{\sigma^2} R^{n+2p+2} |\widetilde A(\widetilde k_{j},R) \setminus \widetilde A(\widetilde k_{j+1},R)|_{\nu_{p+1}}. 
\]
Taking a summation, we have 
\begin{align*}
\ell |\widetilde A(\widetilde k_{\ell},R;t)|_{\nu_{p+1}}^2 &\le \sum_{j=0}^{\ell-1}  |\widetilde A(\widetilde k_{j+1},R;t)|_{\nu_{p+1}}^2 \le  \frac{C}{\sigma^2} R^{n+2p+2} |B_{R}^+ \times [t_0, t_0+aR^{p+1}]|_{\nu_{p+1}} \\& \le  \frac{C}{a\sigma^2}  |B_{R}^+ \times [t_0, t_0+aR^{p+1}]|_{\nu_{p+1}}^2. 
\end{align*}
The lemma follows. 
\end{proof}

The next two lemmas provide estimates on the distribution function of $w$ at each time slice based on the starting time.

\begin{lem}\label{lem:decay-2} Let $0<R<\frac12$, $-\frac12<t_0\le -R^{p+1}$, $\overline M\ge M_1\ge \sup_{B_{2R}^+ \times [t_0, t_0+R^{p+1}] } w$ and $0<\sigma<1$. There exist constants $\delta_0>0$ and $s_0>1$ depending only on $n$, $p$, $\Lambda$, $\overline m$, $\overline M$ and $\sigma$  such that if $0<\delta<\delta_0$ and 
\[
|\{x\in B_R^+: w(x,t_0)>M_1-\delta\}|_{\mu_{p+1}}\le (1-\sigma) |B_R^+|_{\mu_{p+1}}, 
\]
then
\[
|\{x\in B_R^+: w(x,t)>M_1-\frac{\delta}{2^{s_0}}\}|_{\mu_{p+1}}\le (1-\frac{\sigma}{2}) |B_R^+|_{\mu_{p+1}} \quad \mbox{for every } t_0\le t\le  t_0+R^{p+1} . 
\] 
\end{lem}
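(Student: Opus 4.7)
The plan is to carry out a De Giorgi iteration on the level, in the spirit of Lemmas~\ref{lem:decay-1}--\ref{lem:decay-1'}. For integers $s\ge 0$, set
\[
k_s:=M_1-\frac{\delta}{2^s},\quad l_s:=\frac{\delta}{2^s},\quad v_s:=(w-k_s)^+,\quad \psi_s(t):=|\{x\in B_R^+: w(x,t)>k_s\}|_{\mu_{p+1}}.
\]
Since $w\le M_1$ on $B_{2R}^+\times[t_0,t_0+R^{p+1}]$, one has $v_s\le l_s$ pointwise and $v_{s+1}\le v_s$. The hypothesis is equivalent to $v_s(\cdot,t_0)\equiv 0$ on a subset of $B_R^+$ of $\mu_{p+1}$-measure at least $\sigma|B_R^+|_{\mu_{p+1}}$ (for every $s\ge 0$, because $v_s\le v_0$), i.e.\ $\psi_0(t_0)\le(1-\sigma)|B_R^+|_{\mu_{p+1}}$. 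The goal is to show $\psi_{s_0}(t)\le(1-\sigma/2)|B_R^+|_{\mu_{p+1}}$ uniformly for $t\in[t_0,t_0+R^{p+1}]$, with $s_0$ large and $\delta_0$ small, both to be chosen.

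The first step is to apply the Caccioppoli inequality \eqref{eq:caccipoli1} to each $v_s$ on $B_{2R}^+\times[t_0,t]$ for $t\in[t_0,t_0+R^{p+1}]$, using a time-independent cutoff $\eta=\eta(x)$ with $\eta\equiv 1$ on $B_R$, supported in $B_{2R}$, and $|\nabla\eta|\le C/R$. This bounds $\sup_{t}\int_{B_R^+}v_s(t)^2 G^{p+1}\,\ud x$ by three contributions: (i) the initial term on $B_R^+$, which by the hypothesis and the pointwise bound $v_s(t_0)\le l_s\mathbf{1}_{\{w(t_0)>M_1-\delta\}}$ is at most $l_s^2(1-\sigma)|B_R^+|_{\mu_{p+1}}$; (ii) a contribution from the annulus $B_{2R}^+\setminus B_R^+$ together with the cubic correction $\int v_s^3 G^{p+1}$, both bounded by $Cl_s^2|B_R^+|_{\mu_{p+1}}$ using only $v_s\le l_s$, with the cubic piece absorbed by choosing $\delta_0$ small; (iii) a gradient error of order $\frac{1}{R^2}\cdot R^{p+1}\cdot l_s^2\int_{B_{2R}^+}G^2\,\ud x\le Cl_s^2|B_R^+|_{\mu_{p+1}}$. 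Combining with the Chebyshev-type lower bound $\int_{B_R^+}v_s(t)^2 G^{p+1}\ge l_{s+1}^2\psi_{s+1}(t)$ gives a recursive inequality relating $\psi_{s+1}(t)$ to $\psi_s$, in which the useful gain is the factor $(1-\sigma)$ from (i). Iterating for $s=0,1,\dots,s_0-1$ and exploiting the doubling $l_s/l_{s+1}=2$ at each level, the iteration closes provided $s_0$ is chosen sufficiently large in terms of $\sigma$, $n$, $p$, $\Lambda$, $\overline m$ and $\overline M$: the compounded $(1-\sigma)$-factor beats the cumulative boundary and gradient errors and produces the target bound $\psi_{s_0}(t)\le(1-\sigma/2)|B_R^+|_{\mu_{p+1}}$.

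The main obstacle is precisely closing this iteration. A single application of \eqref{eq:caccipoli1} yields only $\psi_{s+1}(t)\le C|B_R^+|_{\mu_{p+1}}$ with a constant of the same order as $|B_R^+|_{\mu_{p+1}}$, because both the gradient error and the contribution from $B_{2R}^+\setminus B_R^+$ are individually of size $l_s^2|B_R^+|_{\mu_{p+1}}$, so the $(1-\sigma)$ information from the hypothesis cannot be read off directly. Recovering the $(1-\sigma/2)$ bound therefore requires tracking carefully how the hypothesis-induced gain compounds level by level, and choosing $s_0$ large so that this compounding dominates the annular and gradient error at every step. The smallness of $\delta_0$ plays only the auxiliary role of absorbing the cubic term $v_s^3$ arising from the nonlinearity $w^p$ in the Caccioppoli estimate.
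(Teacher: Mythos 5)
There is a genuine gap, and you have in fact already identified it yourself in your last paragraph without resolving it. With a cutoff $\eta\equiv 1$ on $B_R$ supported in $B_{2R}$ and the full time interval $[t_0,t_0+R^{p+1}]$, a single application of \eqref{eq:caccipoli1} followed by the Chebyshev step $\int_{B_R^+}v_s(t)^2G^{p+1}\ge (l_s/2)^2\psi_{s+1}(t)$ yields
\[
\psi_{s+1}(t)\le 4\bigl[(1-\sigma)+C\bigr]\,|B_R^+|_{\mu_{p+1}},
\]
where the $C$ collects the annular contribution from $B_{2R}^+\setminus B_R^+$ (the hypothesis only controls $w(\cdot,t_0)$ on $B_R^+$) and the gradient error $\frac{C}{R^2}l_s^2\int_{B_{2R}^+\times[t_0,t_0+R^{p+1}]}G^2\sim Cl_s^2|B_R^+|_{\mu_{p+1}}$. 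Crucially, this $C$ is independent of $s$: passing to level $s+1$ the initial term is again $\le l_{s+1}^2(1-\sigma)|B_R^+|_{\mu_{p+1}}$ and the errors are again $Cl_{s+1}^2|B_R^+|_{\mu_{p+1}}$, so Chebyshev gives the \emph{same} bound $4[(1-\sigma)+C]|B_R^+|_{\mu_{p+1}}$ for $\psi_{s+2}$. There is no compounding of the $(1-\sigma)$ factor; iterating in $s$ alone does not shrink the error, so "choosing $s_0$ large" cannot rescue the argument. The claim that the iteration closes is unsupported and, as written, false.

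The paper's proof overcomes this with three ideas you are missing. First, the cutoff is supported inside $B_R$ (with $\eta\equiv1$ on $B_{\beta R}$, $0<\beta<1$), so the initial term is evaluated on $B_R^+$ only and is genuinely controlled by the hypothesis, at the cost of an annular error $C(1-\beta)|B_R^+|_{\mu_{p+1}}$ and a gradient constant $\sim (1-\beta)^{-2}$. Second, the time interval is shortened to $[t_0,t_0+aR^{p+1}]$ with $a$ small, so that the gradient error becomes proportional to $\frac{|A^a(M_1-\delta,R)|_{\nu_2}}{|Q_R|_{\nu_2}}\le a$; the support radius $\beta$ is then chosen adaptively via $(1-\beta)^3=\frac{|A^a|_{\nu_2}}{|Q_R|_{\nu_2}}$ to balance the annular and gradient errors, giving \eqref{eq:smallinitiallater} with a small $Ca^{1/3}$-term. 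Third, one must still reach times up to $t_0+R^{p+1}$, which requires an $N=a^{-1}$-step induction over the subintervals $[t_{j-1},t_j]$; passing from one subinterval to the next uses Lemma \ref{lem:decay-1} (together with \eqref{eq:connection}) to lower the level from $M_1-\delta 2^{-s_{j-1}}$ to $M_1-\delta 2^{-s_j}$ so that the distribution bound at the end of one subinterval serves as the hypothesis for the next, with a slowly degrading constant $1-\sigma+\frac{j}{4N}\sigma$. None of these mechanisms — the shrunken time interval, the adaptive cutoff radius, and the time-stepping via Lemma \ref{lem:decay-1} — appears in your proposal, and without them the errors cannot be made subordinate to the $(1-\sigma)$ gain.
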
  

\begin{proof}  Let $\eta$ be a cut-off function supported in $B_R$ and $\eta=1$ in $B_{\beta R}$, where $0<\beta<1$ to be fixed.  Let $0<a\le 1$  and 
\[
A^a(k,R)= \{B_R^+ \times[t_0, t_0+a R^{p+1}] \}\cap \{w>k\}.
\]
Let $k_1>1$.  By Lemma \ref{lem:degiorgiclass},  we have
\begin{align*}
&\sup_{t_0<t< t_0+a R^{p+1}} \int_{B_R^+} v^{2}\eta^2 G^{p+1} \,\ud x  \\&
\le \int_{B_R^+} (v^{2}+Cv^3)\eta^2 G^{p+1} \,\ud x\Big|_{t_0}  +  C \int_{B_R^+\times[t_0,t_0+a R^{p+1} ] } |\nabla \eta|^2 G^2 v^{2}\,\ud x\ud t,
\end{align*} 
where $v=(w- (M_1 -\delta))^+$. Note that 
\begin{align*}
\int_{B_R^+} v^{2}\eta^2 G^{p+1} \,\ud x \Big|_t &\ge \delta^2(1-2^{-k_1})^2  |B_{\beta R} \cap \{w(x,t)> M_1- \delta 2^{-k_1}\}|_{\mu_{p+1}},\\
\int_{B_R^+} (v^{2}+Cv^3)\eta^2 G^{p+1} \,\ud x\Big|_{t_0}  &\le \delta^2 (1+C \delta)|\{x\in B_R^+: w(x,t_0)>M_1-\delta\}|_{\mu_{p+1}} \\&\le  \delta^2 (1+C \delta) (1-\sigma) |B_R^+|_{\mu_{p+1}},
\end{align*}
and 
\begin{align*}
 \int_{B_R^+\times[t_0,t_0+a R^{p+1} ] } |\nabla \eta|^2 G^2 v^{2}\,\ud x\ud t &\le \delta^2 \frac{C}{(1-\beta)^2R^2} |A^a(M_1- \delta ,R)|_{\nu_2}\\&
 \le \delta^2|B_R^+|_{\mu_{p+1}} \frac{C }{(1-\beta)^2} \frac{|A^a(M_1- \delta ,R)|_{\nu_2} }{|Q_R|_{\nu_2}}.
\end{align*}
It follows that for all $t\in [t_0, t_0+a R^{p+1}]$, 
\begin{align*}
& |B_{\beta R}^+ \cap \{w(x,t)> M_1- \delta 2^{-k_1}\}|_{\mu_{p+1}} \\&\le |B_R^+|_{\mu_{p+1}}  \left( \frac{(1+C \delta) (1-\sigma)}{(1-2^{-k_1})^2 }+\frac{C }{(1-\beta)^2} \frac{|A^a(M_1- \delta ,R)|_{\nu_2} }{|Q_R|_{\nu_2}} \right).
\end{align*}
Hence,
\begin{align*}
& |B_{R}^+ \cap \{w(x,t)> M_1- \delta 2^{-k_1}\}|_{\mu_{p+1}} \\&\le |B_R^+|_{\mu_{p+1}}  \left( C(1-\beta)+\frac{(1+C \delta) (1-\sigma)}{(1-2^{-k_1})^2 }+\frac{C }{(1-\beta)^2  } \frac{|A^a(M_1- \delta ,R)|_{\nu_2} }{|Q_R|_{\nu_2}} \right).
\end{align*}
By choosing $\beta$ such that
\[
(1-\beta)^3=\frac{|A^a(M_1- \delta ,R)|_{\nu_2} }{|Q_R|_{\nu_2}}, 
\] 
we have
\begin{align}\label{eq:smallinitiallater}
& |B_{R}^+ \cap \{w(x,t)> M_1- \delta 2^{-k_1}\}|_{\mu_{p+1}} \nonumber\\
&\le |B_R^+|_{\mu_{p+1}}  \left(\frac{(1+C \delta) (1-\sigma)}{(1-2^{-k_1})^2 }+C \Big(\frac{|A^a(M_1- \delta ,R)|_{\nu_2} }{|Q_R|_{\nu_2}}\Big)^{\frac13} \right),
\end{align}
where $C>0$ depends only on $n,p,\Lambda,\overline m$ and $\overline M$. 
Since
\[
\frac{|A^a(M_1- \delta ,R)|_{\nu_2} }{|Q_R|_{\nu_2}}\le a,
\]
we can choose $a$ small such that
\[
Ca^{1/3}\le\frac{\sigma}{8}.
\] 
Now we fix such an $a$. We choose $a$ slightly smaller if necessary to make $a^{-1}$ to be an integer. Let $N=a^{-1}$ and denote
 \[
 t_j=t_0+jaR^{p+1}\quad j=1,2,\cdots,N.
 \]
 We will inductively prove that there exist $s_1<s_2<\cdots<s_N$ such that
 \begin{equation}\label{eq:densitypropa}
\sup_{t_{j-1}\le t\le t_j}|B_{ R}^+ \cap \{w(x,t)> M_1- \delta 2^{-s_j}\}|_{\mu_{p+1}}  \le \left(1-\sigma+\frac{j}{4N} \sigma\right)  |B_R^+|_{\mu_{p+1}},
\end{equation}
where all the $s_j$ depend only on  $n,p,\Lambda,\overline m, \overline M$ and $\sigma$, from which the conclusion of this lemma follow.

Let us consider $j=1$ first.   

There exist $\delta_0$ small and $k_0$ large, both of which depends only on $n$, $p$, $\Lambda$, $\overline m$, $\overline M$ and $\sigma$,  such that  for all $\delta\in(0,\delta_0]$ and all $k_1\ge k_0$, we have
\[
\frac{(1+C \delta) (1-\sigma)}{(1-2^{-k_1})^2 }\le 1-\sigma+\frac{\sigma}{8N}.
\]
Then by \eqref{eq:smallinitiallater}, 
\[
|B_{ R}^+ \cap \{w(x,t)> M_1- \delta 2^{-k_1}\}|_{\mu_{p+1}}  \le \left(1-\frac{3}{4} \sigma\right)  |B_R^+|_{\mu_{p+1}} 
\]
 for all $t\in [t_0, t_1]$.  Applying Lemma \ref{lem:decay-1} and \eqref{eq:connection}, for every $k_2>k_1$, we have
\[
 \frac{|A^a(M_1- \delta 2^{-k_2},R)|_{\nu_2} }{|Q_R|_{\nu_2}}\le  C\left(\frac{|A^a(M_1- \delta 2^{-k_2},R)|_{\nu_{p+1}} }{|Q_R|_{\nu_{p+1}}}\right)^{\frac{2}{p+1}} \le C\left(\frac{\sqrt{a}}{\sigma\sqrt{k_2-k_1}}\right)^{\frac{2}{p+1}}.
\]
Hence, we can choose $k_2$ large enough such hat
\[
C\left( \frac{|A^a(M_1- \delta 2^{-k_2},R)|_{\nu_2} }{|Q_R|_{\nu_2}}\right)^{\frac 13}\le \frac{\sigma}{8N}.
\]
Let $k_1=k_0$ and $s_1=k_1+k_2$. By replacing $\delta$ by $\delta 2^{-k_2}$ in \eqref{eq:smallinitiallater}, it follows that
\[
\sup_{t_{0}\le t\le t_1}|B_{ R}^+ \cap \{w(x,t)> M_1- \delta 2^{-s_1}\}|_{\mu_{p+1}}  \le \left(1-\sigma+\frac{1}{4N} \sigma\right)  |B_R^+|_{\mu_{p+1}}.
\]
This prove \eqref{eq:densitypropa} for $j=1$. The proof for $j=2,3,\cdots,N$ is similar, by considering the starting time as $t_{j-1}$. We omit the details.
\end{proof}

 Similarly,

\begin{lem}\label{lem:decay-2'} Let  $0<R<\frac12$, $-\frac12<t_0\le -R^{p+1}$, $\overline m\le m_1\le \inf_{B_{2R}^+ \times [t_0, t_0+R^{p+1}] } w$ and $0<\sigma<1$. There exist constants $\delta_0>0$ and $k_0>1$ depending only on  $n$, $p$, $\Lambda$, $\overline m$, $\overline M$ and $\sigma$  such that if $0<\delta<\delta_0$ and 
\[
|\{x\in B_R^+: w(x,t_0)<m_1+\delta\}|_{\mu_{p+1}}\le (1-\sigma) |B_R^+|_{\mu_{p+1}}, 
\]
then
\[
|\{x\in B_R^+: w(x,t)<m_1+\frac{\delta}{2^{k_0}}\}|_{\mu_{p+1}}\le (1-\frac{\sigma}{2}) |B_R^+|_{\mu_{p+1}} \quad \mbox{for any } t_0\le t\le  t_0+R^{p+1} . 
\] 
\end{lem}

\begin{proof}  Let $\eta$ be a cut-off function supported in $B_R$ and $\eta=1$ in $B_{\beta R}$, where $0<\beta<1$ to be fixed.  Let $0<a\le 1$  and 
\[
\widetilde A^a(k,R)= \{B_R^+ \times[t_0, t_0+a R^{p+1}] \}\cap \{w<k\}.
\]
Let $k_1>1$.  By Lemma \ref{lem:degiorgiclass},  we have
\begin{align*}
&\sup_{t_0<t< t_0+a R^{p+1}} \int_{B_R^+} (\tilde v^{2}-C\tilde v^3)\eta^2 G^{p+1} \,\ud x  \\
&\quad\le \int_{B_R^+}\tilde v^{2}\eta^2 G^{p+1} \,\ud x\Big|_{t_0}  +  C \int_{B_R^+\times[t_0,t_0+a R^{p+1} ] } |\nabla \eta|^2 G^2 \tilde v^{2}\,\ud x\ud t,
\end{align*} 
where $\tilde v=(w- (m_1 +\delta))^-$. Choose $\delta_0$ small such that $1-C\delta_0>1/2$. Note that 
\begin{align*}
\int_{B_R^+} (\tilde v^{2}-C\tilde v^3)\eta^2 G^{p+1} \,\ud x \Big|_t &\ge (1-C\delta)\delta^2(1-2^{-k_1})^2  |B_{\beta R} \cap \{w(x,t)< m_1+ \frac{\delta}{2^{k_1}}\}|_{\mu_{p+1}}, \\
\int_{B_R^+} \tilde v^{2}\eta^2 G^{p+1} \,\ud x\Big|_{t_0}  &\le \delta^2 |\{x\in B_R^+: w(x,t_0)<m_1+\delta\}|_{\mu_{p+1}} \\&\le  \delta^2  (1-\sigma) |B_R^+|_{\mu_{p+1}},
\end{align*}
and 
\begin{align*}
 \int_{B_R^+\times[t_0,t_0+a R^{p+1} ] } |\nabla \eta|^2 G^2\tilde v^{2}\,\ud x\ud t &\le \delta^2 \frac{C}{(1-\beta)^2R^2} |\widetilde A^a(m_1+ \delta ,R)|_{\nu_2}\\&
 \le \delta^2|B_R^+|_{\mu_{p+1}} \frac{C }{(1-\beta)^2} \frac{|\widetilde A^a(m_1+ \delta ,R)|_{\nu_2} }{|Q_R|_{\nu_2}}.
\end{align*}
It follows that for all $t\in [t_0, t_0+a R^{p+1}]$, 
\begin{align*}
& |B_{\beta R}^+ \cap \{w(x,t)< m_1+ \delta 2^{-k_1}\}|_{\mu_{p+1}} \\
&\le |B_R^+|_{\mu_{p+1}}  \left( \frac{ (1-\sigma)}{(1-C \delta)(1-2^{-k_1})^2 }+\frac{C }{(1-\beta)^2} \frac{|\widetilde A^a(m_1+ \delta ,R)|_{\nu_2} }{|Q_R|_{\nu_2}} \right)\\
&\le |B_R^+|_{\mu_{p+1}}  \left( \frac{(1+C \delta) (1-\sigma)}{(1-2^{-k_1})^2 }+\frac{C }{(1-\beta)^2} \frac{|\widetilde A^a(m_1+ \delta ,R)|_{\nu_2} }{|Q_R|_{\nu_2}} \right).
\end{align*}
Hence,
\begin{align*}
& |B_{R}^+ \cap \{w(x,t)< m_1+ \delta 2^{-k_1}\}|_{\mu_{p+1}} \\&\le |B_R^+|_{\mu_{p+1}}  \left( C(1-\beta)+\frac{(1+C \delta) (1-\sigma)}{(1-2^{-k_1})^2 }+\frac{C }{(1-\beta)^2  } \frac{|\widetilde A^a(m_1+ \delta ,R)|_{\nu_2} }{|Q_R|_{\nu_2}} \right),
\end{align*}
where $C>0$ depends only on $n$, $p$, $\Lambda$, $\overline m$ and $\overline M$. By choosing $\beta$ such that
\[
(1-\beta)^3=\frac{|\widetilde A^a(m_1+ \delta ,R)|_{\nu_2} }{|Q_R|_{\nu_2}}, 
\] 
we have
\begin{align}\label{eq:smallinitiallater3}
& |B_{R}^+ \cap \{w(x,t)< m_1+ \delta 2^{-k_1}\}|_{\mu_{p+1}} \nonumber\\
&\le |B_R^+|_{\mu_{p+1}}  \left(\frac{(1+C \delta) (1-\sigma)}{(1-2^{-k_1})^2 }+C \Big(\frac{|\widetilde A^a(m_1+ \delta ,R)|_{\nu_2} }{|Q_R|_{\nu_2}}\Big)^{\frac13} \right).
\end{align}
With the help of Lemma  \ref{lem:decay-1'}, the left proof is almost identical to that of Lemma \ref{lem:decay-2}, and we omit it.
\end{proof}

Finally, combining all the above lemmas, we obtain the improvement of oscillation of $w$ at the boundary.

\begin{lem}\label{lem:decay-3} Let $0<R<\frac12$, $\overline M\ge M\ge \sup_{B_{2R}^+ \times [-R^{p+1},0] } w$ and $0<\sigma<1$. There exist constants $\delta_0>0$ and $k_0>1$ depending only on $n$, $p$, $\Lambda$, $\overline m$, $\overline M$ and $\sigma$  such that if $0<\delta<\delta_0$ and 
\[
|\{x\in B_R^+: w(x,-R^{p+1})>M-\delta\}|_{\mu_{p+1}}\le (1-\sigma) |B_R^+|_{\mu_{p+1}}, 
\]
then 
\[
\sup_{Q_{R/2}^+} w\le M-\frac{\delta}{2^{k_0}}.  
\]

\end{lem}

\begin{proof} It follows from Lemma \ref{lem:decay-2},  Lemma \ref{lem:decay-1} with $a=1$ and Lemma \ref{lem:smallonlargeset}.  
\end{proof}

\begin{lem}\label{lem:decay-3'} Let $0<R<\frac12$, $\overline m\le m\le \inf_{B_{2R}^+ \times [-R^{p+1},0] } w$ and $0<\sigma<1$. There exist constants $\delta_0>0$ and $k_0>1$ depending only on  $n$, $p$, $\Lambda$, $\overline m$, $\overline M$ and $\sigma$ such that if $0<\delta<\delta_0$ and 
\[
|\{x\in B_R^+: w(x,-R^{p+1})<m+\delta\}|_{\mu_{p+1}}\le (1-\sigma) |B_R^+|_{\mu_{p+1}}, 
\]
then 
\[
\inf_{Q_{R/2}^+} w\ge m+\frac{\delta}{2^{k_0}}.  
\]

\end{lem}

\begin{proof} It follows from Lemma \ref{lem:decay-2'},  Lemma \ref{lem:decay-1'} with $a=1$ and Lemma \ref{lem:smallonlargeset2}.  
\end{proof}

Then the H\"older regularity at the boundary follows in a standard way.

\begin{thm}\label{thm:holderatboundary}
Let $p\ge1$.  Suppose \eqref{eq:G} and \eqref{eq:ellipticity} hold. Suppose $w$ is Lipschitz continuous in $\overline B_1^+\times(-1,0]$, satisfies \eqref{eq:lo-up}, and is a solution of \eqref{eq:main3} in the sense of distribution. 
Then there exist $\alpha>0$ and $C>0$, both of which depend only on $n$, $p$, $\Lambda$, $\overline m$ and $\overline M$, such that for every $\bar x\in\pa' B_{1/2}^+$ and $\bar t\in (-1/4,0)$, there holds
\[
|w(x,t)-w(\bar x,\bar t)|\le C (|x-\bar x|+|t-\bar t|^{\frac{1}{p+1}})^\alpha\quad\forall\,(x,t)\in B_{1/2}^+(\bar x)\times(-1/4+\bar t,\bar t].
\]
\end{thm}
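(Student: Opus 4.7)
The plan is a De Giorgi oscillation-decay iteration driven by Lemmas~\ref{lem:decay-3} and~\ref{lem:decay-3'}. By translating in the tangential variables $x'$ and in $t$, I reduce to $(\bar x, \bar t) = (0,0)$; the structural hypotheses on $G$ and $A$ are preserved. Fix a base radius $r_0 \in (0, 1/4)$ small enough that $Q_{r_0}^+ \subset B_{1/2}^+ \times (-1/2, 0]$, set $r_k = r_0/4^k$, and write
\[
M_k = \sup_{Q_{r_k}^+} w, \quad m_k = \inf_{Q_{r_k}^+} w, \quad \omega_k = M_k - m_k.
\]

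At step $k$ I apply the improvement lemmas with container radius $2R = r_k$, initial time slice $t_0 = -R^{p+1} = -(r_k/2)^{p+1}$, and conclusion cylinder $Q_{R/2}^+ = Q_{r_{k+1}}^+$. Since the sets
\[
\bigl\{ x \in B_R^+ : w(x,t_0) > \tfrac{M_k+m_k}{2} \bigr\} \quad \text{and} \quad \bigl\{ x \in B_R^+ : w(x,t_0) < \tfrac{M_k+m_k}{2} \bigr\}
\]
partition $B_R^+$ up to a null set, at least one has $\mu_{p+1}$-measure at most $\tfrac12 |B_R^+|_{\mu_{p+1}}$, giving the alternative. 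Choosing $\delta = \min(\omega_k/2, \delta_0)$ and using the monotonicity $\{w(\cdot,t_0) > M_k - \delta\} \subset \{w(\cdot,t_0) > (M_k+m_k)/2\}$ when $\delta \le \omega_k/2$ (and its symmetric version for the minimum), the hypothesis of Lemma~\ref{lem:decay-3} or Lemma~\ref{lem:decay-3'} is satisfied with $\sigma = 1/2$; whichever case occurs yields $\omega_{k+1} \le \omega_k - \delta / 2^{k_0}$.

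Iteration proceeds in two regimes dictated by the $\delta_0$ cutoff. While $\omega_k \ge 2\delta_0$ the choice $\delta = \delta_0$ gives an arithmetic decrement of $\delta_0/2^{k_0}$ per step, bringing $\omega_k$ below $2\delta_0$ in finitely many (data-dependent) steps. Once $\omega_k < 2\delta_0$, the choice $\delta = \omega_k/2$ gives the geometric contraction $\omega_{k+1} \le \theta \omega_k$ with $\theta = 1 - 2^{-(k_0+1)} \in (0,1)$, whence $\omega_k \le C r_k^{\alpha}$ with $\alpha = \log(1/\theta)/\log 4$. For any $(x,t)$ with $\rho := |x - \bar x| + |t - \bar t|^{1/(p+1)} \le r_0$, pick $k$ with $r_{k+1} < \rho \le r_k$; then $(x,t) \in Q_{r_k}^+(\bar x, \bar t)$ and $|w(x,t) - w(\bar x, \bar t)| \le \omega_k \le C \rho^{\alpha}$, while for $\rho > r_0$ the bound is trivial.

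The main obstacle I anticipate is correctly bookkeeping the scaling: the ``$2R$ container / $R/2$ conclusion'' structure of Lemmas~\ref{lem:decay-3}--\ref{lem:decay-3'} forces a $4$-fold contraction per step rather than the usual dyadic $2$-fold one, and the smallness requirement $\delta < \delta_0$ forces the two-regime (arithmetic then geometric) iteration above in order to accommodate the possibly large initial oscillation $\omega_0 \le \overline M - \overline m$ without any additional smallness assumption on $w$.
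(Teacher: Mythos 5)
Your proposal is correct and follows essentially the same route as the paper's proof: a measure dichotomy on an initial time slice feeds Lemmas~\ref{lem:decay-3} and~\ref{lem:decay-3'}, contracting the oscillation from $Q_{r_k}^+$ to $Q_{r_k/4}^+$ and yielding geometric decay. The only (cosmetic) difference is that the paper avoids your two-regime arithmetic-then-geometric iteration by taking $\delta=\omega(R)/\ell_0$ with a single fixed integer $\ell_0\ge 2$ chosen so that $(\overline M-\overline m)/\ell_0<\delta_0$, which keeps $\delta$ simultaneously below $\delta_0$ and a fixed fraction of the current oscillation at every scale.
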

\begin{proof}
Without loss of generality, we suppose $(\bar x,\bar t)=(0,0)$. Choose $\ell_0\ge 2$ large so that 
\[
\frac{\overline M-\overline m}{\ell_0}<\delta_0,
\]
where $\delta_0$ is smaller one between those in Lemma \ref{lem:decay-3} with $\sigma=\frac12$ or Lemma \ref{lem:decay-3'} with $\sigma=\frac12$.

For $R\in(0,1/2]$, let
\[
\mu(R)=\sup_{Q_R^+}w, \quad \widetilde\mu(R)=\inf_{Q_R^+}w, \quad\omega(R)=\mu(R)-\widetilde\mu(R).
\]
Then one of the following two inequalities must hold:
\begin{align}
\left|\left\{x\in B_{R/2}^+: w\Big(x,-(R/2)^{p+1}\Big)>\mu(R)- \frac{1}{\ell_0}\omega(R)\right\}\right|_{\mu_{p+1}}\le \frac12 |B_{R/2}^+|_{\mu_{p+1}}, \label{eq:dichonomy1}\\
\left|\left\{x\in B_{R/2}^+: w\Big(x,-(R/2)^{p+1}\Big)<\widetilde\mu(R)+ \frac{1}{\ell_0}\omega(R)\right\}\right|_{\mu_{p+1}}\le \frac12 |B_{R/2}^+|_{\mu_{p+1}}.\label{eq:dichonomy2}
\end{align}
If \eqref{eq:dichonomy1} holds, then by Lemma \ref{lem:decay-3}, there exists $s_0>1$ such that
\[
\mu(R/4)\le \mu(R)-\frac{\omega(R)}{\ell_02^{s_0}}.
\]
If \eqref{eq:dichonomy2} holds, then by Lemma \ref{lem:decay-3'}, there exists $s_0>1$ such that
\[
\widetilde\mu(R/4)\ge \widetilde\mu(R)+\frac{\omega(R)}{\ell_02^{s_0}}.
\]
In any case, we obtain
\[
\omega(R/4)\le \left(1-\frac{1}{\ell_02^{s_0}}\right)\omega(R).
\]
This implies that there exist two positive constants $\alpha$ and $C$, both of which depend only on $n$, $p$, $\Lambda$, $\overline m$ and $\overline M$, such that
\[
\omega(R)\le CR^{\alpha}\quad\,\forall R\in(0,1/2],
\]
from which the conclusion follows.
\end{proof}

Once we have the H\"older estimates at the boundary, we can use a scaling argument and the interior H\"older estimates to obtain the H\"older estimates up to $\{x_n=0\}$.
\begin{proof}[Proof of Theorem \ref{thm:holdernearboundary}]
For $R>0$, let 
\begin{equation}\label{eq:scalingfunctions}
\begin{split}
w_R(y,s)=w(Ry,R^{p+1}s),\ A_R(x)=A(Rx),\ G_R(x)&=R^{-1} G(Rx).
\end{split}
\end{equation}
Then $w_R$ satisfies
\begin{equation}\label{eq:rescaledequation}
G_R^{p+1} \partial_t w_R^p=\mbox{div}(A_R G_R^2\nabla w_R)  \quad\mbox{in }Q_{1/R}^+ .
\end{equation}
This equation is uniformly parabolic in $B_{1/2}(e_n)\times (-1,0]$, where $e_n=(0,\cdots,0,1)\in\R^n$.

For any $\bar x=(0,\bar x_n )\in B_{1/2}^+$, we let $R:=\bar x_n>0$ and $w_R,A_R,G_R$ as in \eqref{eq:scalingfunctions}. Applying the interior H\"older estimates for uniformly parabolic equations to \eqref{eq:rescaledequation} in $B_{1/2}(e_n)\times (-1,0]$, there exist $C>1$ and $0<\beta<1$, both depending only on $\Lambda,n,p,\overline m$ and $\overline M$, such that  
\begin{equation}\label{eq:holderafterscaling}
|w_R(e_n,0)-w_R(y,s)|\le C (|y-e_n|+\sqrt{|s|})^\beta
\end{equation}
for all $(y,s)$ such that $|y-e_n|+\sqrt{|s|}<1/2$.

Consider $t\in (-1/4,0]$. If $|t|\le R^{2p+2}$, then we have
\[
|w(\bar x,t)-w(\bar x, 0)|=|w_R(e_n,t/R^{p+1})-w_R(e_n,0)|\le C |t/R^{p+1}|^{\beta/2}\le C |t|^{\beta/4},
\]
where we used \eqref{eq:holderafterscaling} in the first inequality. If $|t|\ge R^{2p+2}$, then we have
\begin{align*}
|w(\bar x,t)-w(\bar x, 0)|& \le |w(\bar x,t)-w(0,t)|+|w(0,t)-w(0,0)|+|w(0,0)-w(\bar x, 0)|\\
& \le C(R^{\alpha}+|t|^{\frac{\alpha}{p+1}})\\
&\le C |t|^{\frac{\alpha}{2(p+1)}},
\end{align*}
where we used Theorem \ref{thm:holderatboundary} in the second inequality. This shows that $w$ is H\"older continuous in the time variable.

Consider $\tilde x=(\tilde x',\tilde x_n)\in B_{1/2}^+$ such that $\tilde x_n\le \bar x_n$. If $\tilde x\in B_{R^2}(\bar x)$, then we have 
\[
|w(\bar x,0)-w(\tilde x, 0)|=|w_R(e_n,0)-w_R(\tilde x/R,0)|\le C ||\tilde x-\bar x|/R|^{\beta}\le C |\tilde x-\bar x|^{\beta/2},
\]
where we used \eqref{eq:holderafterscaling} in the first inequality. If $\tilde x\not\in B_{R^2}(\bar x)$, since $\tilde x_n\le \bar x_n$, then we have 
\begin{align*}
&|w(\bar x,0)-w(\tilde x, 0)|\\
& \le |w(\bar x,0)-w(0,0)|+|w(0,0)-w(\tilde x',0,0)|+|w(\tilde x',0,0)-w(\tilde x, 0)|\\
& \le C(R^{\alpha}+|\tilde x'|^\alpha)\\
&\le C |\bar x-\tilde x|^{\frac{\alpha}{2}},
\end{align*}
where we used Theorem \ref{thm:holderatboundary} in the second inequality. This shows that $w$ is H\"older continuous in the spatial variables.

This finishes the proof of Theorem \ref{thm:holdernearboundary}.
\end{proof}

\section{Higher order estimates and proof of the main theorem}\label{sec:regularity}

Let $\om \subset \R^n$, $n\ge 1$, be a smooth bounded  domain. Let 
\[
\mathcal{L}=- \Delta -b,
\] where $b<\lda_1$ is a constant and $\lda_1$ is the first Dirichlet  eigenvalue of $-\Delta $ on $\om$. Recall that $d(x)=dist(x,\pa \om)$. Consider nonnegative solutions of the equation
\be\label{eq:FDE-1}
\begin{cases}
\pa_t v^p =-\mathcal{L} v \quad \mbox{in } \om \times (0,T],\\
v(x,t)=0\quad \mbox{on } \partial\om \times (0,T],
\end{cases}
\ee
where $1<p<\infty$ and $T>0$. Suppose that
\be \label{eq:pre-1}
\frac{1}{c_0}d(x) \le v(x,t)\le c_0 d(x)\quad \mbox{in } \om \times (0,T]
\ee
for some constant $c_0\ge 1$. Denote
\begin{align*}
C^{3, 2}(\overline \om\times [0,T])=\{&u\in C(\overline \om\times [0,T]): D_x u, D_x^2 u, D_x^3 u,\\ &\quad\quad\quad\quad\partial_t u, D_x \partial_t u, D_x^2 \partial_t u, \partial_t^2 u \in C(\overline \om\times [0,T])\}.
\end{align*}
Let $\alpha\in (0,1)$ and 
\begin{align*}
[u]_{\mathscr{C}^\alpha( \om \times (0,T])}&:=\sup_{\substack{(x,t),(y,t)\in  \om \times (0,T],\\ x\neq y}}\frac{|u(x,t)-u(y,t)|}{|x-y|^\alpha}+ \sup_{\substack{(x,t),(x,s)\in  \om \times (0,T], \\ t\neq s}}\frac{|u(x,t)-u(x,s)|}{|t-s|^\frac{\alpha}{p+1}}\\
&\quad+\sup_{\substack{(x,t),(x,s)\in  \om \times (0,T], \\ t\neq s}}d(x)^{\frac{(p-1)\alpha}{2}}\frac{|u(x,t)-u(x,s)|}{|t-s|^\frac{\alpha}{2}}.
\end{align*}
This weighted H\"older semi-norm was introduced in \cite{JX19} for establishing Schauder estimates of the linearized equation of \eqref{eq:FDE-1}. It incorporates the two different scalings of the equation \eqref{eq:FDE-1} when the scaling is centered  at either an interior point or a boundary point. 
Denote
$$\|u\|_{\mathscr{C}^{\al}(\overline \om\times [0,T])}= \|u\|_{L^\infty(\overline \om\times [0,T])}+[u]_{\mathscr{C}^{\al}(\overline \om\times [0,T])}.$$

\begin{thm}\label{thm:uttbound} 
Let $v \in C^{3,2}(\overline \om \times [0,T])$  be a positive solution of  \eqref{eq:FDE-1} satisfying \eqref{eq:pre-1}. 

(i). If $p$ is an integer, then $v\in C^\infty(\overline\om\times [T/2, T]))$, and 
\[
\|d^{-1} \pa_t^\ell v\|_{L^\infty(\om\times [T/2, T])} + \|D_x^k\partial_t^\ell v\|_{L^\infty(\om\times [T/2, T]) }\le C,
\] 
for every $k,\ell=0,1,2,\cdots$,  where $C>0$ depends only on  $n$, $\om$, $T$, $p$, $b$, $c_0$, $ \ell$ and $k$. 

\medskip

(ii). If $p$ is not an integer, then $\pa_t^\ell v(\cdot,t)\in C^{2+p}(\overline \om)$ for every $\ell=0,1,2,\cdots$ and all $t\in[T/2,T]$, and there holds
\[
\|d^{-1} \pa_t^\ell v\|_{L^\infty(\om\times [T/2, T])}+\sup_{t\in[T/2,T]}\|\partial_t^\ell v(\cdot,t)\|_{C^{2+p}(\overline\om) }\le C,
\]
where $C>0$ depends only on  $n$, $\om$, $T$, $p$, $b$, $c_0$ and $\ell$. 
\end{thm}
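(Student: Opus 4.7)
The plan is to use Theorem~\ref{thm:holdernearboundary} to establish the H\"older continuity of $v/d$ up to $\partial\om$, which is the single missing ingredient~\eqref{eq:bootstrap-stp} of the bootstrap scheme developed in~\cite{JX19}, and then invoke the weighted Schauder theory of that paper to upgrade to the full regularity stated in the theorem.

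For the first step, near any boundary point I would flatten $\partial\om$ to $\{x_n=0\}$ by a smooth diffeomorphism, so that $d$ becomes comparable to $x_n$, and write $v=Gw$ with $G\sim x_n$. Using $\mathrm{div}(G^2\nabla w)=G^2\Delta w+2G\nabla G\cdot\nabla w$, the equation~\eqref{eq:FDE-1} becomes
\[
G^{p+1}\pa_t w^p=\mathrm{div}\bigl(A\,G^2\nabla w\bigr)+G(\Delta G+bG)w,
\]
with $A$ uniformly elliptic and the second term on the right bounded by $C\,G^{p+1}w$; this is~\eqref{eq:main3} up to a lower-order perturbation that a minor modification of Lemma~\ref{lem:degiorgiclass} absorbs. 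The hypothesis~\eqref{eq:pre-1} translates into~\eqref{eq:lo-up} for $w$, and Theorem~\ref{thm:holdernearboundary} then yields $v/d\in C^\gamma(\overline\om\times[T/2,T])$.

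For the bootstrap, differentiating~\eqref{eq:FDE-1} in $t$ shows that $\pa_t^\ell v$ satisfies a linear parabolic equation whose principal part is $pv^{p-1}\pa_t(\,\cdot\,)+\mathcal{L}(\,\cdot\,)$, with coefficient degenerating like $d^{p-1}$ at the boundary. This is precisely the class treated by the weighted Schauder theory of~\cite{JX19}, whose norm $\mathscr{C}^\alpha$ is designed to reflect the two different scalings at interior and boundary points. With $v/d\in C^\gamma$ from Step~1, the Schauder estimates promote $\pa_t v/d$ to $\mathscr{C}^\alpha$ and $v$ to $\mathscr{C}^{2+\alpha}$ in $x$; iterating on $\ell$ yields the claimed estimates $\|d^{-1}\pa_t^\ell v\|_\infty+\|D_x^k\pa_t^\ell v\|_\infty\le C$. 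When $p\in\mathbb{Z}_+$, the nonlinearity $s\mapsto s^p$ is $C^\infty$, so the bootstrap continues indefinitely and delivers $v\in C^\infty(\overline\om\times[T/2,T])$. When $p\notin\mathbb{Z}_+$, the map $s^p$ is only of class $C^{\lfloor p\rfloor,\,p-\lfloor p\rfloor}$ near $s=0$, and since $v\sim d$ this caps the spatial regularity of each $\pa_t^\ell v$ at $C^{2+p}(\overline\om)$, which the Schauder bootstrap attains.

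The main obstacle is the transfer of Theorem~\ref{thm:holdernearboundary} to $v/d$ under a genuinely curved boundary: one must verify that the flattening diffeomorphism really produces an equation of the form~\eqref{eq:main3} up to manageable lower-order terms, and that these terms can be absorbed in the Caccioppoli step and the De~Giorgi iteration of Section~\ref{sec:holder}. Once this is in place, the remainder is an essentially mechanical invocation of~\cite{JX19}, provided one carefully tracks the weighted H\"older norms $\mathscr{C}^\alpha$ through every stage of the induction.
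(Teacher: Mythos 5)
Your overall strategy matches the paper's: use the De Giorgi theory of Section~\ref{sec:holder} to obtain H\"older continuity of $v/d$, then bootstrap via the weighted Schauder estimates of~\cite{JX19}. But there is one genuinely different—and less efficient—choice in your Step~1, and one step in your bootstrap is compressed past the point where it is accurate.

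For Step~1, you set $v=Gw$ with a generic $G\sim x_n$ after flattening, which leaves the lower-order term $G(\Delta G+bG)w$ in the equation; you then propose to absorb it by modifying Lemma~\ref{lem:degiorgiclass} and the subsequent iteration. That would require re-examining every Caccioppoli and isoperimetric step in Section~\ref{sec:holder} to check that the perturbation is harmless, and you have not done that work. The paper sidesteps the whole issue: it takes $G$ to be the Green's function of $\mathcal{L}=-\Delta-b$ with pole at an interior point $x_0$. Away from $x_0$ one has $\Delta G+bG=0$ \emph{identically}, so the lower-order term vanishes and $w=v/G$ satisfies exactly $G^{p+1}\partial_t w^p=\mathrm{div}(G^2\nabla w)$ in a boundary collar, which is precisely \eqref{eq:main3} after flattening. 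Theorem~\ref{thm:holdernearboundary} then applies as stated, with no modification of Section~\ref{sec:holder}. If you want to follow your route you must prove a version of Theorem~\ref{thm:holdernearboundary} with a right-hand side; if you instead adopt the Green's-function choice, the difficulty disappears entirely.

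For the bootstrap, you write that ``the Schauder estimates promote $\partial_t v/d$ to $\mathscr{C}^\alpha$.'' This is not a one-step consequence. After Step~1 the Schauder theory (Theorem~2.19 of~\cite{JX19}) applied to the equation $p\,d^{p-1}(v/d)^{p-1}\partial_t v=-\mathcal{L}v$ gives $\partial_t v,\,D_x v\in\mathscr{C}^\alpha$ and $D_x^2 v(\cdot,t)\in C^\beta$—i.e.\ $\partial_t v$ is bounded and H\"older, but you do not yet know that $\partial_t v$ vanishes at rate $d$ at $\partial\om$. Establishing $|\partial_t v/d|\le C$ is the content of a separate step in the paper (Step~3), done by studying time difference quotients $v_\lambda^h=(v(\cdot,t)-v(\cdot,t-h))/h^\lambda$ and iterating over a sequence of exponents $\lambda_m\uparrow 1$, using elliptic estimates on time slices plus the calculus lemma of Ladyzenskaja--Solonnikov--Ural'ceva to extract $\mathscr{C}^\alpha$ control on $v_\lambda^h/d$ at each stage, and only then applying the Schauder estimate to the difference-quotient equation. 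Without this intermediate ladder, the induction on $\ell$ cannot get started, because the coefficient $a=(v/d)^{p-1}$ in the equation for $\partial_t v$ has the right regularity only once you also control the zeroth-order coefficient $f$, which requires $\partial_t v/d$ bounded. So ``iterating on $\ell$'' hides a nontrivial, separately-proved quantitative estimate.
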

\begin{proof}
Step 1. We show that there exist $\alpha>0$ and $C>0$, both of which depend only on  $n$, $\om$, $T$, $p$, $b$ and $c_0$    such that
\be \label{eq:step1-a}
\left\|\frac{v}{d}\right\|_{\mathscr{C}^{\al}(\om \times [T/8, T])}\le C. 
\ee

Indeed, by \eqref{eq:pre-1} and the H\"older regularity theory of linear uniformly parabolic equations, we only need to show the H\"older estimation of $v/d$ near the lateral boundary.  We  pick  a point  $x_0\in \om$ staying far away from the boundary $\pa \om$ and let  $G$ be the Green's function centered at $x_0$, i.e., 
\[
\mathcal{L} G= \delta_{x_0} \quad \mbox{in }\om, \quad G=0 \quad \mbox{on }\pa \om. 
\] Then $\frac{1}{c_1}\le G/d\le c_1$ and $G/d$ is smooth in $\om_{\rho}:=\{x\in \om: d(x)<\rho\}$ for some constants $\rho< \frac{1}{2} d(x_0)$ and $c_1\ge 1$, both of which depend only on $\Omega, n$ and $b$.  Let
\[
w:=\frac{v}{G}.
\]
Then it is elementary to check that 
\[
G^{p+1} \pa_t w^p =\mathrm{div}(G^2 \nabla w) \quad \mbox{in }\om_{\rho} \times (0,T]. 
\]
By straightening out the boundary $\partial\Omega$, and using the assumption \eqref{eq:pre-1} and Theorem \ref{thm:holdernearboundary}, we have  
$$\| w\|_{\mathscr{C}^{\al}(\om_{\rho/2} \times [T/8,T])} \le C$$  
for some $C>0$ depending only  $n$, $\om$, $T$, $p$, $b$, $c_0$ and $c_1$. Therefore, \eqref{eq:step1-a} follows. 

\medskip

Step 2. Let $\eta(t)$ be a cutoff function satisfying $\eta=0$ for $t \le T/8$ and $\eta=1$ for $t\ge T/7$, and let $\tilde v=\eta v$. Then 
\[
pd^{p-1} \left(\frac{v}{d}\right)^{p-1}  \pa_t  \tilde v= -\mathcal{L} \tilde v+pd^{p-1} \left(\frac{v}{d}\right)^{p-1}  v\pa_t \eta \quad \mbox{in }\om \times (0,T].
\]
By \eqref{eq:step1-a} and the Schauder estimates (Theorem 2.19 of \cite{JX19}), we have 
\[
\|\pa_tv\|_{\mathscr{C}^{\al} (\overline\Omega \times [T/7,T])} +\|D_x v\|_{\mathscr{C}^{\al} (\overline\Omega \times [T/7,T])} + \sup_{t\in[T/7,T]} \|D_x^2 v(\cdot, t)\|_{C^{\beta} (\om)}\le C,
\]
where $\beta=\min\{\al, p-1\}$. 

\medskip

Step 3. We claim that 
\be \label{eq:AB-est}
\left\|\frac{\pa_t v}{ d}\right\|_{\mathscr{C}^{\al} (\overline\Omega \times [T/3,T])}\le C  \quad \mbox{in } \om\times [T/3, T]. 
\ee
Indeed, for $\lda\in (0,1]$ and arbitrarily  small positive number  $0<h<\frac{T}{100}$, we define 
\[
v^h_\lda(x,t)=\frac{v(x,t)-v(x,t-h)}{h^\lda}.
\]
By the equation of $v$, 
\be \label{eq:time-diff-quotient}
p d^{p-1} a \pa_t v_{\lda}^h =\mathcal{L} v_{\lda}^h +  d^{p-1}f \frac{v_{\lda}^h}{d}  \quad \mbox{in } \om \times (T/100,T],
\ee
where $a(x,t)=(\frac{v(x,t)}{d(x)})^{p-1}$
and
\begin{align*}
f(x,t)&= -d(x)^{2-p} \pa_t v(x,t-h) \int_0^1 \frac{\ud }{\ud \theta}[\theta v(x,t)+(1-\theta ) v(x,t-h)]^{p-1} \,\ud \theta     \\&
= -(p-1) \pa_t v(x,t-h)\int_0^1 \left[\theta \frac{v(x,t)}{d}+(1-\theta ) \frac{v(x,t-h)}{d}\right]^{p-2} \,\ud \theta.  
\end{align*}
By \eqref{eq:pre-1}, $\frac{1}{c_0}\le a\le c_0$. This together with \eqref{eq:step1-a} implies that  $$a, f\in\mathscr{C}^{\al} (\overline\Omega \times [T/6,T]). $$

Set ${\lda_m}=\frac{(m+1)\al}{p+1}$ if $m<\frac{p+1}{\al}-1$ and $\lda_m=1$ if $m\ge \frac{p+1}{\al}-1$. By Step 2 and Taylor expansion calculations, we have 
\be \label{eq:dq-1}
|\pa_t v_{\lda_0}^h |+\left|\frac{v^h_{\lda_0}} d\right|\le C \quad \mbox{in } \om\times [T/6, T]. 
\ee
Using \eqref{eq:dq-1} and applying elliptic estimates on each time slice of \eqref{eq:time-diff-quotient}, we have 
\[
\sup_{T/6\le t\le T}\|v_{\lda_0}^h (\cdot, t)\|_{C^{1,\gamma}(\om)} \le C(\gamma)
\]
for any $0<\gamma<1$. Combing with $|\pa_t v_{\lda_0}^h |\le C$ in \eqref{eq:dq-1}, it follows from a calculus lemma, Lemma 3.1 on page 78 in \cite{LSU} (cf. Lemma B.3 in \cite{JX19}), we have 
\[
|\nabla v_{\lda_0}^h(x,t) -\nabla v_{\lda_0}^h(y,s)| \le C(|x-y|^2+|t-s|)^{\frac{\gamma}{2}}, \quad \forall~ (x,t), (y,s)\in \om\times [T/6,T]. 
\]
By choosing $\gamma\ge \al$, we have
\[
\left\|\frac{v_{\lda_0}^h}{d}\right\|_{\mathscr{C}^{\al}(\overline\om\times [T/6,T])}\le C. 
\]
Applying the Schauder estimates (Theorem 2.19 of \cite{JX19}) to \eqref{eq:time-diff-quotient}, we then conclude that 
\be\label{eq:dq-2}
\|\pa_tv_{\lda_0}^h\|_{\mathscr{C}^{\al} (\overline\Omega \times [T/5,T])} +\|D_x v_{\lda_0}^h\|_{\mathscr{C}^{\al} (\overline\Omega \times [T/5,T])} + \sup_{t\in[T/5,T]} \|D_x^2 v_{\lda_0}^h(\cdot, t)\|_{C^{\beta} (\om)}\le C,
\ee
where $\beta=\min\{\al, p-1\}$ and $C>0$ is independent of $h$. It follows that 
\[
|\pa_t v_{\lda_1}^h |+\left|\frac{v^h_{\lda_1}} d\right|\le C \quad \mbox{in } \om\times [T/4, T].
\] 
Repeating the above argument in finite steps to obtain \eqref{eq:dq-2} with $\lda_0$ replaced by $1$ in $\overline\Omega \times [T/3,T]$, in particular,
\[
\|D_x \partial_t v\|_{\mathscr{C}^{\al} (\overline\Omega \times [T/3,T])}\le C, 
\]
from which \eqref{eq:AB-est} follows immediately. 

\medskip

Step 4. Once we have \eqref{eq:AB-est}, higher order estimates follow from the bootstrap arguments by differentiating the equation \eqref{eq:FDE-1} in the time variable, as in proof of Theorem 4.8 of \cite{JX19}.  Therefore, Theorem \ref{thm:uttbound} is proved.
\end{proof}

\begin{proof}[Proof of Theorem \ref{thm:main}]  Once we have the  \textit{a priori} estimates in Theorem \ref{thm:uttbound}, the left arguments of proving Theorem \ref{thm:main} is identical to that of Theorem 5.1 in \cite{JX19}, which  we sketch in the below.  

Let $u$ be a bounded nonnegative weak solution of \eqref{eq:main} and \eqref{eq:main-d}.    By  the continuity of $u$  (see Chen-DiBenedetto \cite{CDi}) and a translation in the time variable if needed, we can assume that $u_0=u(\cdot,0)\in C_0(\overline\Omega)$ and $u_0$ is not identical to zero. Let $T^*>0$ be the extinction time of $u$. It is proved in Section 5.10.1 of V\'azquez \cite{Vaz} that there exists $ q\in[p+ 1,\infty)$, which depends only on $n$ and $p$,  such that 
\[
\frac{\ud }{\ud t}\left( \int_{\om} u^{q}\,\ud x\right)^{\frac{p-1}{q}} \le -C(n,p,\om)<0. 
\] See (5.79) there.  Integrating the above differential  inequality from $t$ to $T^*$, we have  
\[
C(T^*-t)^{\frac{1}{p-1}}\le  \|u(\cdot, t)\|_{L^q(\om)}  \le \|u(\cdot, t)\|_{L^\infty(\om)}. 
\] By the maximum principle, $u(x, t)\le \max_{\overline \om} u_0$. Hence, for any $0<\delta<T^*/4$ we have 
\[
C\delta^{\frac{1}{p-1}}\le  \|u(\cdot, t)\|_{L^\infty(\om)} \le  \|u_0\|_{L^\infty(\om)} \quad \mbox{for }\delta<t<T^*-\delta,
\]
where $C>0$ depends only on $n,p,\om,\delta$ and $\|u_0\|_{L^\infty(\om)}$.  By the proof of Proposition 6.2 of DiBenedetto-Kwong-Vespri \cite{DKV},  
\be \label{eq:DKV-a}
\frac{1}{C}\le \frac{u}{d}\le C \quad \mbox{in }\om\times[\delta,T^*-\delta],
\ee 
where $C>0$ depending only on $n,p,\om,\delta$ and $\|u_0\|_{L^\infty(\om)}$. 

Define
\begin{align*} \label{eq:good-initial}
\mathcal{S}=\Big\{v\in C^\infty( \Omega) \cap C^{2+\alpha}_0(\overline \om):  \inf_{\om}\frac{v}{d}>0, \ v^{1-p}\Delta v \in C_0^{2+\alpha}(\overline \om),\ (v^{1-p}\Delta  )^2 v \in C_0^\alpha(\overline \om) \Big\}\nonumber,
\end{align*}
where $0<\al <\min \{1, p-1,\frac{2}{p-1}\},$ and the subscript $0$ in $C_0^{k+\al}$  means that every function belonging to this set vanishes on $\partial\om$.

Step 1. If $u_0\in\mathcal{S}$, then the theorem follows from repeatedly using the short time existence theorem (Theorem 3.2 in \cite{JX19}), \eqref{eq:DKV-a}  and Theorem \ref{thm:uttbound}.

Step 2. Otherwise, one can take a sequence of functions $\{u_0^{(j)}\}$ in $\mathcal{S}$ to approximate $u_0$ in $C_0(\overline\Omega)$. Let $u^{(j)}$ be the weak solution to \eqref{eq:main}-\eqref{eq:main-d} with initial data $u^{(j)}(\cdot, 0)=u_0^{(j)}$. Then it follows from the comparison principle, the H\"older estimates up to $\pa \om $ (see Chen-DiBenedetto \cite{CDi}) and the Ascoli-Arzela theorem, there exists a subsequence of $\{u^{(j)}\}$, which is still denoted as $\{u^{(j)}\}$, such that it converges locally uniformly to $u$ on $\overline\Omega\times(0,T^*)$. Let $\delta\in (0,T^*/4)$. By \eqref{eq:DKV-a}, there exists $C$ independent of $j$ such that 
\begin{equation}\label{eq:appequniformbound}
\frac{d(x)}{C}\le u^{(j)}\le Cd(x)\quad\mbox{in }\Omega\times[\delta, T^*-\delta] \mbox{ for all large }j.
\end{equation}
Meanwhile,  since $u^{(j)}_0\in\mathcal{S}$, then from Step 1, we know $u^{(j)}\in C^{3,2}(\overline\Omega\times[0,T^*-\delta])$ for all large $j$. Then, it follows from \eqref{eq:appequniformbound}  and Theorem \ref{thm:uttbound} that there exists a subsequence of $\{u^{(j)}\}$ converging to $u$ in $C^{3,2}(\overline\Omega\times[2\delta,T^*-\delta])$. In particular, $u(\cdot,2\delta)\in\mathcal{S}$, and thus, the theorem follows from Step 1. 
\end{proof}

The same argument will show that Theorem  \ref{thm:main} also applies to the equation \eqref{eq:FDE-1}.

\small

\smallskip

\noindent T. Jin

\noindent Department of Mathematics, The Hong Kong University of Science and Technology\\
Clear Water Bay, Kowloon, Hong Kong\\
Email: \textsf{tianlingjin@ust.hk}

\medskip

\noindent J. Xiong

\noindent School of Mathematical Sciences, Laboratory of Mathematics and Complex Systems, MOE\\ Beijing Normal University, 
Beijing 100875, China\\
Email: \textsf{jx@bnu.edu.cn}

\end{document}